\newtheorem{theorem}{Theorem}
\newtheorem{lemma}[theorem]{Lemma}
\newtheorem{proposition}[theorem]{Proposition}
\newtheorem{remark}[theorem]{Remark}
\newtheorem{definition}[theorem]{Definition}
\newtheorem{corollary}[theorem]{Corollary}
\newenvironment{Proof}[1][Proof.]{\begin{trivlist}
\item[\hskip \labelsep {\bfseries #1}]}{\flushright
$\Box$\end{trivlist}}
\begin{document}

\noindent{\Large 
$\frac{1}{2}$-derivations of Lie algebras 
and 
transposed Poisson algebras}\footnote{
The work is supported by the Russian Science Foundation under grant 19-71-10016. 

}

 \

 {\bf
 Bruno Leonardo Macedo Ferreira$^{a}$,
 Ivan Kaygorodov$^{b}$ \& Viktor Lopatkin$^{c}$ \\

 \medskip
 
 \medskip
}

{\tiny

$^{a}$ Federal University of Technology, Guarapuava, Brazil

 \smallskip

$^{b}$ CMCC, Universidade Federal do ABC, Santo Andr\'e, Brazil

 \smallskip

$^{c}$ Saint Petersburg   University, Russia

\

\smallskip

 \medskip

 E-mail addresses:

\smallskip
 
Bruno Leonardo Macedo Ferreira 
 (brunoferreira@utfpr.edu.br)

 \smallskip

 Ivan Kaygorodov (kaygorodov.ivan@gmail.com) 

 \smallskip
 
 Viktor Lopatkin (wickktor@gmail.com)

}

\ 

\ 

 \medskip

\ 

\noindent {\bf Abstract.}
{\it A relation between $\frac{1}{2}$-derivations of Lie algebras and 
transposed Poisson algebras has been established.
Some non-trivial transposed Poisson algebras with a certain Lie algebra 
(Witt algebra, the algebra $\mathcal{W}(a,-1)$, the thin Lie algebra and a solvable Lie algebra with abelian nilpotent radical) have been done.
In particular, we have developed an example of the transposed Poisson algebra with associative and Lie parts isomorphic to the
Laurent polynomials and the Witt algebra.
On the other side, it has been proved that there are no non-trivial transposed Poisson algebras with a Lie algebra part isomorphic to 
a semisimple finite-dimensional algebra, a simple finite-dimensional superalgebra, the Virasoro algebra,
$N=1$ and $N=2$ superconformal algebras, or a semisimple finite-dimensional $n$-Lie algebra.}

\ 

\noindent {\bf Keywords}: 
{\it $\delta$-derivation, Lie algebra,
transposed Poisson algebra, Witt algebra, Virasoro algebra.}

\ 

\noindent {\bf MSC2020}: 17A30, 17B40, 17B63.

\ 

\section*{Introduction}
Poisson algebras arose from the study of Poisson geometry in the 1970s and have appeared in an extremely wide range of areas in mathematics and physics, such as Poisson manifolds, algebraic geometry, operads, quantization theory, quantum groups and classical and quantum mechanics. The study of Poisson algebras also led to other algebraic structures, such as 
noncommutative Poisson algebras \cite{kubo}, 
generic Poisson algebras \cite{ksu18},
algebras of Jordan brackets and generalized Poisson algebras \cite{k17,ck10},
Gerstenhaber algebras \cite{kos},
Novikov-Poisson algebras \cite{xu},
Malcev-Poisson-Jordan algebras \cite{saidmal}, 
double Poisson algebras \cite{vd},
$n$-ary Poisson algebras \cite{ck16}, etc.
The study of all possible Poisson structures with a certain Lie or associative part is an important problem in the theory of Poisson algebras \cite{jawo,said,said2}.
Recently, a dual notion of the Poisson algebra (transposed Poisson algebra) by exchanging the roles of the two binary operations in the Leibniz rule defining the Poisson algebra has been introduced in the paper of Bai, Bai, Guo and Wu \cite{bai20}. 
They have shown that the transposed Poisson algebra defined this way not only shares common properties of the Poisson algebra, including the closure undertaking tensor products and the Koszul self-duality as an operad but also admits a rich class of identities. More significantly, a transposed Poisson algebra naturally arises from a Novikov-Poisson algebra by taking the commutator Lie algebra of the Novikov algebra. Consequently, the classic construction of a Poisson algebra from a commutative associative algebra with a pair of commuting derivations has a similar construction of a transposed Poisson algebra when there is one derivation. More broadly, the transposed Poisson algebra also captures the algebraic structures when the commutator is taken in pre-Lie Poisson algebras and two other Poisson type algebras. 

The study of $\delta$-derivations of Lie algebras was initiated by Filippov in 1998 \cite{fil1,fil2,fil}. 
The space of $\delta$-derivations includes usual derivations, antiderivations and elements from the centroid.
During last 20 years, $\delta$-derivations of prime Lie algebras \cite{fil1, fil2},
$\delta$-derivations of simple Lie and Jordan superalgebras \cite{kay09, kay10, kay12mz,zus10},
$\delta$-derivations of semisimple Filippov ($n$-Lie) algebras \cite{kay12izv, kay14mz} have been studied.
$\delta$-Derivations is a particular case of generalized derivations (see, \cite{dgmrs20,ll,Beites}).

In the present paper, we have found a way how all transposed Poisson algebra structures with a certain Lie algebra can be described.
Our main tool is a description of the space of $\frac{1}{2}$-derivations of a certain Lie algebra
and establishing a connection between the space of $\frac{1}{2}$-derivations of the Lie part of a transposed Poisson algebra and the space of right multiplications of the associative part of this transposed Poisson algebra. Namely, every right {\it associative} multiplication is a {\it Lie} $\frac{1}{2}$-derivation.
Using the known description of $\delta$-derivations of semisimple finite-dimensional Lie algebras, 
we have found that there are no transposed Poisson algebras with a semisimple finite-dimensional Lie part.
In the case of simple infinite-dimensional algebras, we have a different situation.
It has been proved that the Witt algebra admits many nontrivial structures of transposed Poisson algebras.
Later we studied structures of transposed Poisson algebras defined on one of the most interesting generalizations of the Witt algebra. 
Namely, we have considered $\frac{1}{2}$-derivations of the algebra ${\mathcal W}(a,b)$ and have proved that the algebra 
${\mathcal W}(a,b)$ does not admit structures of transposed Poisson algebras if and only if $b\neq -1.$
All transposed Poisson algebra structures defined on ${\mathcal W}(a,-1)$ have been described. 
In the next section another example of an algebra well related to the Witt algebra has been considered.
We have proved that there are no transposed Poisson algebras defined on the Virasoro algebra.
As some corollaries we have proved that there are no transposed Poisson algebras structures defined on $N=1$ and $N=2$ superconformal algebras. The rest of the paper is dedicated to a classification of $\frac{1}{2}$-derivations and constructions of transposed Poisson algebras defined on the 
thin Lie algebra and the solvable Lie algebra with abelian nilpotent radical of codimension $1.$

\newpage
\section{$\frac{1}{2}$-derivations of Lie algebras 
and transposed Poisson algebras}

All algebras and vector spaces we consider over the complex field but many results can be proven over other fields without modifications of proofs.

\subsection{Binary case}
Let us give some important definitions for our consideration.
The definition of the transposed Poisson algebra was given in a paper of Bai, Bai, Guo and Wu \cite{bai20}.
The definition of $\frac{1}{2}$-derivations as a particular case of $\delta$-derivations was given in a paper of Filippov \cite{fil1}.

\begin{definition}\label{tpa}
Let ${\mathfrak L}$ be a vector space equipped with two nonzero bilinear operations $\cdot$ and $[\;,\;].$
The triple $({\mathfrak L},\cdot,[\;,\;])$ is called a transposed Poisson algebra if $({\mathfrak L},\cdot)$ is a commutative associative algebra and
$({\mathfrak L},[\;,\;])$ is a Lie algebra that satisfies the following compatibility condition
\begin{equation}\label{link1}
2z\cdot [x,y]=[z\cdot x,y]+[x,z\cdot y].\end{equation}
\end{definition}

The dual Leibniz rule (\ref{link1}) gives the following trivial proposition.

\begin{proposition}
Let $({\mathfrak L}, \cdot, [\;,\;])$ be a transposed Poisson algebra. Then
\begin{enumerate}
 \item if $({\mathfrak L}, [\;,\;])$ is non-perfect $($i.e. $[{\mathfrak L}, {\mathfrak L}] \neq {\mathfrak L})$ then $({\mathfrak L}, \cdot)$ is non-simple;
 \item if $({\mathfrak L}, \cdot)$ is simple then $({\mathfrak L}, [ \ , \ ])$ is perfect $($i.e. $[{\mathfrak L}, {\mathfrak L}] = {\mathfrak L}).$
 
\end{enumerate} 
\end{proposition}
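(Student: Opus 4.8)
The plan is to observe that the compatibility condition (\ref{link1}) forces the derived subalgebra $[{\mathfrak L},{\mathfrak L}]$ of the Lie structure to be an ideal of the commutative associative algebra $({\mathfrak L},\cdot)$, and then to read off both statements from this single fact. Note first that the two parts are contrapositives of one another: part (2), ``$({\mathfrak L},\cdot)$ simple $\Rightarrow$ $({\mathfrak L},[\ ,\ ])$ perfect,'' is exactly the contrapositive of part (1), ``$({\mathfrak L},[\ ,\ ])$ non-perfect $\Rightarrow$ $({\mathfrak L},\cdot)$ non-simple.'' Hence it suffices to establish part (1), and part (2) is then automatic.

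First I would show the absorption property ${\mathfrak L}\cdot[{\mathfrak L},{\mathfrak L}]\subseteq[{\mathfrak L},{\mathfrak L}]$. Indeed, for arbitrary $x,y,z\in{\mathfrak L}$ the right-hand side of (\ref{link1}), namely $[z\cdot x,y]+[x,z\cdot y]$, manifestly lies in $[{\mathfrak L},{\mathfrak L}]$; therefore so does $2z\cdot[x,y]$, and hence $z\cdot[x,y]\in[{\mathfrak L},{\mathfrak L}]$. Since $[{\mathfrak L},{\mathfrak L}]$ is spanned by brackets $[x,y]$ and $\cdot$ is bilinear, this extends to all of $[{\mathfrak L},{\mathfrak L}]$. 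Because $({\mathfrak L},\cdot)$ is commutative, this one-sided condition already makes $[{\mathfrak L},{\mathfrak L}]$ a two-sided ideal of the associative algebra.

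To finish part (1), suppose $({\mathfrak L},[\ ,\ ])$ is non-perfect, so that $[{\mathfrak L},{\mathfrak L}]\neq{\mathfrak L}$. By Definition \ref{tpa} the bracket is a nonzero operation, so $[{\mathfrak L},{\mathfrak L}]\neq 0$. Thus $[{\mathfrak L},{\mathfrak L}]$ is a proper nonzero ideal of $({\mathfrak L},\cdot)$, witnessing that this associative algebra is non-simple. Part (2) then follows by contraposition, as noted above.

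I expect no genuine obstacle here — the proposition is essentially immediate once one spots that the dual Leibniz rule lands in $[{\mathfrak L},{\mathfrak L}]$. The only point requiring a moment's care is the nonvanishing $[{\mathfrak L},{\mathfrak L}]\neq 0$, which is precisely what guarantees the ideal is nonzero; this is supplied by the standing hypothesis in Definition \ref{tpa} that both operations are nonzero.
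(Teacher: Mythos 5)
Your proof is correct and is exactly the paper's argument: the paper's one-line proof also observes that $[{\mathfrak L},{\mathfrak L}]\neq 0$ is an ideal of the associative algebra $({\mathfrak L},\cdot)$, from which both parts follow. You have merely spelled out the details (absorption via the dual Leibniz rule, two-sidedness from commutativity, nonvanishing from the definition) that the paper leaves implicit.
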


\begin{proof}
Note that the set $ [{\mathfrak L}, {\mathfrak L}] \neq 0$ gives an ideal of the associative algebra $({\mathfrak L}, \cdot)$
and hence we have the statement of the Proposition.
\end{proof}


\begin{definition}
Let $({\bf A}, \cdot)$ be an arbitrary associative algebra, 
and let $p$ and $q$ be two fixed elements of ${\bf A}.$ Then a
new algebra is derived from ${\bf A}$ by using the same vector space structure of ${\bf A}$
but defining a new multiplication
$$x * y = x\cdot p\cdot y - y\cdot q\cdot x$$
for $x, y \in {\bf A}.$ The resulting algebra is denoted by ${\bf A}(p, q)$ and is called the
$(p, q)$-mutation of the algebra ${\bf A}$
(for more information about mutations see, for example, \cite{elduque91}). 
In the commutative case, all $(p, q)$-mutations can be reduced to the case $q=0.$
\end{definition}


\begin{lemma}
Let $({\mathfrak L}, \cdot, [\;,\;])$ be a transposed Poisson algebra.
Then every mutation $({\mathfrak L}, \cdot_p)$ 
of $({\mathfrak L}, \cdot)$ gives a transposed Poisson algebra 
$({\mathfrak L}, \cdot_p, [\;,\;])$ with the same Lie multiplication.
\end{lemma}

\begin{proof}
Every mutation of an associative commutative algebra gives an associative commutative algebra.
  By the dual Leibniz identity,  
\begin{longtable}{lll}
$2z \cdot_p [x,y]$&$=$&$
2(z \cdot p) \cdot [x,y]=
[(z\cdot p)\cdot x,y]+[x,(z \cdot p) \cdot y]$\\
&$=$&$[z \cdot_p x,y]+[x,z\cdot_p y], $
\end{longtable}
 and the statement follows.  
\end{proof}

\begin{definition}\label{12der}
Let $({\mathfrak L}, [\;,\;])$ be an algebra with multiplication $[\;,\;]$ and $\varphi$ be a linear map.
Then $\varphi$ is a $\frac{1}{2}$-derivation if it satisfies
\begin{equation}
\varphi[x,y]= \frac{1}{2} \left([\varphi(x),y]+ [x, \varphi(y)] \right).\end{equation}
\end{definition}

Summarizing Definitions \ref{tpa} and \ref{12der} we have the following key lemma.
\begin{lemma}\label{glavlem}

Let $({\mathfrak L},\cdot,[\;,\;])$ be a transposed Poisson algebra 
and $z$ an arbitrary element from ${\mathfrak L}.$
Then the right multiplication $R_z$ in the associative commutative algebra $({\mathfrak L},\cdot)$ gives a $\frac{1}{2}$-derivation of the Lie algebra $({\mathfrak L}, [\;,\;]).$
\end{lemma}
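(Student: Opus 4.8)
The plan is to recognise that the defining identity of a $\frac{1}{2}$-derivation is, up to a scalar, nothing but the dual Leibniz rule (\ref{link1}) itself. First I would unwind the notation: $R_z$ denotes the right multiplication $R_z(x)=x\cdot z$ in the associative algebra $({\mathfrak L},\cdot)$, and since $({\mathfrak L},\cdot)$ is commutative we have $R_z(x)=z\cdot x$. In particular $R_z$ is a linear map (being a one-variable specialisation of the bilinear product $\cdot$), so it is a legitimate candidate for a $\frac{1}{2}$-derivation in the sense of Definition \ref{12der}. With this identification, the left-hand side of the $\frac{1}{2}$-derivation identity becomes $R_z[x,y]=z\cdot[x,y]$.

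Next I would substitute $\varphi=R_z$ into the identity of Definition \ref{12der} and evaluate the right-hand side term by term, again using commutativity to write $R_z(x)=z\cdot x$ and $R_z(y)=z\cdot y$. This produces $\tfrac12\bigl([z\cdot x,\,y]+[x,\,z\cdot y]\bigr)$, which is exactly $\tfrac12$ times the right-hand side of the compatibility condition (\ref{link1}). Invoking (\ref{link1}) then replaces this by $\tfrac12\cdot 2\,z\cdot[x,y]=z\cdot[x,y]$, which is precisely the left-hand side $R_z[x,y]$ computed above. Hence the $\frac{1}{2}$-derivation identity holds for $R_z$ for all $x,y\in{\mathfrak L}$, and the verification is complete.

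I do not expect any genuine obstacle here: the entire content of the lemma is that the transposed Leibniz rule (\ref{link1}) is a reformulation of the statement ``each right multiplication is a Lie $\frac{1}{2}$-derivation'', the factor $2$ in (\ref{link1}) matching the factor $\tfrac12$ in Definition \ref{12der}. The only point deserving a moment's care is the systematic use of commutativity of $\cdot$, which is what lets me pass freely between $R_z$ as a right multiplication and $z\cdot(-)$ as a left multiplication and so align $R_z[x,y]$ with $z\cdot[x,y]$; once this is observed, the chain of equalities is immediate and requires no further computation.
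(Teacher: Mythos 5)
Your proof is correct and is exactly the argument the paper intends: the paper states Lemma \ref{glavlem} without proof, as an immediate consequence of ``summarizing'' Definitions \ref{tpa} and \ref{12der}, and your verification---that substituting $\varphi=R_z$ (with $R_z(x)=z\cdot x$ by commutativity) into the $\frac{1}{2}$-derivation identity reproduces precisely the compatibility condition (\ref{link1}) up to the factor $2$---is the intended, and only, content of the lemma.
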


The main example of $\frac{1}{2}$-derivations is the multiplication by an element from the ground field.
Let us call such $\frac{1}{2}$-derivations as trivial
$\frac{1}{2}$-derivations.
As it  follows from the following theorem we are not interested in trivial $\frac{1}{2}$-derivations.

\begin{theorem}\label{princth}
Let ${\mathfrak L}$ be a Lie algebra without non-trivial $\frac{1}{2}$-derivations.
Then every transposed Poisson algebra structure defined on ${\mathfrak L}$ is trivial.
\end{theorem}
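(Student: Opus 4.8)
The plan is to convert the hypothesis into a rigidity statement about the associative multiplication, using Lemma~\ref{glavlem} as the bridge. First I would fix an arbitrary transposed Poisson algebra structure $({\mathfrak L},\cdot,[\;,\;])$ and recall from Lemma~\ref{glavlem} that for every $z\in{\mathfrak L}$ the right multiplication $R_z$ is a $\frac{1}{2}$-derivation of the Lie algebra $({\mathfrak L},[\;,\;])$. Since ${\mathfrak L}$ is assumed to have only trivial $\frac{1}{2}$-derivations, each $R_z$ must be a scalar multiple of the identity: there is a function $\lambda\colon{\mathfrak L}\to\Co$ with $R_z=\lambda(z)\,\mathrm{id}$, equivalently $z\cdot x=\lambda(z)\,x$ for all $x\in{\mathfrak L}$.

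Next I would extract the structural consequences of this scalar form. Bilinearity of $\cdot$ in its first argument forces $\lambda$ to be a linear functional, since $(\lambda(z_1)+\lambda(z_2))x=z_1\cdot x+z_2\cdot x=(z_1+z_2)\cdot x=\lambda(z_1+z_2)x$ and likewise for scalars, evaluated at any nonzero $x$. The decisive input, however, is commutativity of $\cdot$: from $z\cdot x=x\cdot z$ we obtain the identity $\lambda(z)\,x=\lambda(x)\,z$ for all $x,z\in{\mathfrak L}$. Note that associativity need not be checked separately, as the multiplication $z\cdot x=\lambda(z)x$ is automatically associative.

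Finally I would run a linear-independence argument on this identity. Whenever $x$ and $z$ are linearly independent, $\lambda(z)\,x=\lambda(x)\,z$ immediately forces $\lambda(x)=\lambda(z)=0$. Because a transposed Poisson algebra has a nonzero Lie bracket, we have $\dim{\mathfrak L}\geq 2$, so every vector belongs to some linearly independent pair and hence $\lambda\equiv 0$. This yields $z\cdot x=0$ for all $x,z$, i.e. the associative part vanishes and the structure is trivial. I do not anticipate a genuine obstacle: the argument is short, and the only point requiring care is the reduction to the linearly independent case, which degenerates solely in dimension one, a situation excluded since the Lie multiplication of a transposed Poisson algebra is by definition nonzero.
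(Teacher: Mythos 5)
Your proposal is correct and follows essentially the same route as the paper: invoke Lemma~\ref{glavlem} to see that every right multiplication is a $\frac{1}{2}$-derivation, use the hypothesis to write $z\cdot x=\lambda(z)x$, then exploit commutativity ($\lambda(z)x=\lambda(x)z$) together with linear independence in dimension $\geq 2$ to force $\lambda\equiv 0$. Your treatment of the degenerate one-dimensional case (via the nonzero Lie bracket) is in fact slightly more explicit than the paper's, but the argument is the same.
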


\begin{proof}
Let $({\mathfrak L},\cdot,[\;,\;])$ be a transposed Poisson algebra.
Then $\varphi_y (x)= x \cdot y = \varphi_x (y),$ where $\varphi_x$ and $\varphi_y$ are some $\frac{1}{2}$-derivations of the Lie algebra $({\mathfrak L}, [\;,\;]).$
Hence, there are $\kappa_x, \kappa_y \in \mathbb{C},$ such that 
$\varphi_x(z)= \kappa_x z$ and $\varphi_y(z)= \kappa_y z$ for all $z \in {\mathfrak L}.$
If $({\mathfrak L},\cdot,[\;,\;])$ is non-trivial then $dim \ {\mathfrak L} >1$ and we can take $x$ and $y$ as some linear independent elements. 
It follows that $\kappa_x=\kappa_y=0$ and $x\cdot y=0$ for all elements $x,y \in {\mathfrak L}.$

\end{proof}

The description of $\frac{1}{2}$-derivations of simple finite-dimensional Lie algebras is given in \cite{fil2}.
There are no non-trivial $\frac{1}{2}$-derivations of simple finite-dimensional Lie algebras. Since every complex semisimple finite-dimensional Lie algebra is a direct sum of simple algebras then every $\frac{1}{2}$-derivation of this semisimple algebra will be invariant on all simple algebras from this direct sum
(see, for example, \cite[proof of Theorem 6]{kay14mz}). 
From here and Theorem \ref{princth} we have the following corollary.

\begin{corollary}
There are no non-trivial transposed Poisson algebra structures defined on a complex semisimple finite-dimensional Lie algebra.
\end{corollary}

The next corollary is about a non-semisimple Lie algebra without 
non-trivial transposed Poisson algebra structures.
The Schrödinger algebra ${\mathfrak s}={\mathfrak{sl}}_2 \ltimes {\mathfrak H}$ is a semidirect product of the $3$-dimensional Lie algebra ${\mathfrak {sl}}_2$ and the $3$-dimensional Heisenberg (nilpotent Lie) algebra ${\mathfrak H}$ (see, for example, \cite{yu18}). It is easy to see that all $\frac{1}{2}$-derivations of ${\mathfrak s}$ are trivial and we have the following statement.

\begin{corollary}\label{corlie}
There are no non-trivial transposed Poisson algebra structures defined on the Schrödinger algebra.
\end{corollary}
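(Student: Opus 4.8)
The plan is to deduce the corollary from Theorem~\ref{princth}: once we know that $\mathfrak{s}$ carries no non-trivial $\frac{1}{2}$-derivations, that theorem immediately rules out non-trivial transposed Poisson structures, so the whole task is to show that every $\frac{1}{2}$-derivation $\varphi$ of $\mathfrak{s}$ is a scalar multiple of the identity. I would fix the usual basis $e,h,f$ of $\mathfrak{sl}_2$ (with $[h,e]=2e$, $[h,f]=-2f$, $[e,f]=h$) together with $p,q,z$ of $\mathfrak{H}$, where $\langle p,q\rangle$ is the standard $\mathfrak{sl}_2$-module, $[p,q]=z$, and $z$ is central in all of $\mathfrak{s}$. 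The two features I would lean on are that $\mathfrak{H}$ is an ideal with $\mathfrak{s}/\mathfrak{H}\cong\mathfrak{sl}_2$, and that $\mathrm{ad}_h$ acts diagonally with weights $2,1,0,0,-1,-2$ on $e,p,h,z,q,f$.

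First I would pin down $\varphi$ modulo the radical. Let $\pi\colon\mathfrak{s}\to\mathfrak{s}/\mathfrak{H}\cong\mathfrak{sl}_2$ be the quotient homomorphism and $\iota\colon\mathfrak{sl}_2\hookrightarrow\mathfrak{s}$ the inclusion. Since $\pi$ is a Lie homomorphism, a short computation shows that $\psi:=\pi\circ\varphi\circ\iota$ satisfies the $\frac{1}{2}$-derivation identity on $\mathfrak{sl}_2$: applying $\pi$ to $\varphi[x,y]=\tfrac12([\varphi x,y]+[x,\varphi y])$ for $x,y\in\mathfrak{sl}_2$ and using $\pi[a,b]=[\pi a,\pi b]$ gives $\psi[x,y]=\tfrac12([\psi x,y]+[x,\psi y])$. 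As $\mathfrak{sl}_2$ is simple finite-dimensional, Filippov's description \cite{fil2} forces $\psi=\lambda\,\mathrm{id}$ for some $\lambda\in\Co$. Hence $\varphi(x)=\lambda x+h(x)$ with $h(x)\in\mathfrak{H}$ for every $x\in\mathfrak{sl}_2$.

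It then remains to determine the $\mathfrak{H}$-valued correction $h$ on $\mathfrak{sl}_2$ and the values of $\varphi$ on $p,q,z$, for which I would feed the basis brackets into the $\frac{1}{2}$-derivation identity and compare weights under $\mathrm{ad}_h$. Testing the identity on $[h,y]=\mu y$ for each weight vector $y$ severely restricts which weight components $\varphi(y)$ may have, while the relations $[e,f]=h$, $[h,e]=2e$, $[e,q]=p$, $[f,p]=q$, and $[p,q]=z$ turn these restrictions into a small linear system whose only solution is $\varphi=\lambda\,\mathrm{id}$. The main obstacle -- and the reason one must actually run the computation rather than invoke simplicity -- is the clustering of weights: $h$ and the central element $z$ both sit in weight $0$, and the weights $\pm1$ of $p,q$ are exactly half the weights $\pm2$ of $e,f$, so the $\tfrac12$ in the derivation identity a priori permits off-diagonal components such as a $p$-term in $\varphi(e)$ or a $z$-term in $\varphi(h)$. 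Checking that the defining relations kill each of these mixed terms (in particular that $\varphi(z)=\lambda z$, using $z=[p,q]$ and centrality) is the crux, after which Theorem~\ref{princth} closes the argument.
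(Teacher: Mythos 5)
Your proposal is correct and takes essentially the same approach as the paper: the paper merely asserts ("it is easy to see") that all $\frac{1}{2}$-derivations of the Schrödinger algebra are trivial and then invokes Theorem~\ref{princth}, which is exactly your reduction. Your quotient-to-$\mathfrak{sl}_2$ step and the weight/relation analysis correctly fill in the omitted verification (the linear system from $[h,e]=2e$, $[h,f]=-2f$, $[e,f]=h$, $[h,p]=p$, $[h,q]=-q$, $[e,q]=p$, $[p,q]=z$ does force $\varphi=\lambda\,\mathrm{id}$), so the argument is sound.
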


\subsection{Supercase}
Superization of the definition of the transposed Poisson algebra should be given by the usual way
(for many types of superstructures see, for example, the brilliant paper of Kac \cite{kac}
or the brilliant survey of Leites \cite{Leites} and references therein).

\begin{definition}
Let ${\mathfrak L}={\mathfrak L}_0 \oplus {\mathfrak L}_1$ be a $\mathbb{Z}_2$-graded vector space 
equipped with two nonzero bilinear super-operations $\cdot$ and $[\;,\;].$
The triple $({\mathfrak L},\cdot,[\;,\;])$ is called a transposed Poisson superalgebra if 
$({\mathfrak L},\cdot)$ is a supercommutative associative superalgebra and
$({\mathfrak L},[\;,\;])$ is a Lie superalgebra that satisfies the following compatibility condition
\begin{equation}
2z\cdot [x,y]=[z\cdot x,y]+ (-1)^{|x||z|}[x,z\cdot y], \ x,y,z \in {\mathfrak L}_0 \cup {\mathfrak L}_1.\end{equation}
\end{definition}

As a first result in the study of transposed Poisson superalgebras we have the following statement.
It is a super-analog of Theorem \ref{princth} with a similar proof of it.

\begin{lemma}\label{superteo}
Let ${\mathfrak L}$ be a Lie superalgebra without non-trivial $\frac{1}{2}$-superderivations.
Then every transposed Poisson superalgebra structure defined on ${\mathfrak L}$ is trivial.
\end{lemma}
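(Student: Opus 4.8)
The plan is to follow the proof of Theorem~\ref{princth} line by line, carrying the $\mathbb{Z}_2$-grading and its Koszul signs through each step. First I would record the verbatim super-analogue of Lemma~\ref{glavlem}: for every homogeneous $z \in {\mathfrak L}$ the left multiplication $\varphi_z \colon x \mapsto z\cdot x$ is a $\frac{1}{2}$-superderivation of $({\mathfrak L},[\;,\;])$ of parity $|z|$. This is immediate, since reading the supercompatibility identity $2z\cdot[x,y]=[z\cdot x,y]+(-1)^{|x||z|}[x,z\cdot y]$ as a condition on the map $x\mapsto z\cdot x$ applied to $[x,y]$ is exactly the defining identity of a parity-$|z|$ $\frac{1}{2}$-superderivation.

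Next I would feed in the hypothesis. Because ${\mathfrak L}$ admits no non-trivial $\frac{1}{2}$-superderivations, and because the trivial ones (scalar multiples of the identity) are all even, the only $\frac{1}{2}$-superderivations of ${\mathfrak L}$ are even maps $\kappa\,\mathrm{id}$. Splitting $\varphi_z$ by the parity of $z$ then gives two regimes: if $z$ is odd, $\varphi_z$ is an odd map and must therefore vanish, so $z\cdot x=0$ for all $x$; if $z$ is even, $\varphi_z=\kappa_z\,\mathrm{id}$ for some $\kappa_z\in\mathbb{C}$, i.e. $z\cdot x=\kappa_z x$. Using supercommutativity $x\cdot y=(-1)^{|x||y|}y\cdot x$ to slide any odd factor to the left, every product with an odd argument already vanishes, so only products of even elements survive.

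It then remains to annihilate the even products. For even homogeneous $x,y$ the two previous relations together with supercommutativity yield $\kappa_x y = x\cdot y = y\cdot x = \kappa_y x$. When $\dim{\mathfrak L}_0\geq 2$ this is the same endgame as in the binary case: choosing $x,y$ linearly independent forces $\kappa_x=\kappa_y=0$, and since each even element has a linearly independent even partner, all $\kappa_z$ vanish and the product is identically zero.

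The one place where the argument genuinely departs from Theorem~\ref{princth} --- and the step I expect to need the most care --- is the small even part. If $\dim{\mathfrak L}_0=1$, say ${\mathfrak L}_0=\mathbb{C}e$, the relation $\kappa_e y=\kappa_y e$ produces no linearly independent pair inside ${\mathfrak L}_0$, so I would instead play the even generator against the odd part: for any odd $u$ one has $e\cdot u=\kappa_e u$ on the one hand and $e\cdot u=u\cdot e=0$ on the other (the sign being trivial since $|e|=0$), forcing $\kappa_e=0$ whenever ${\mathfrak L}_1\neq 0$. The remaining cases ${\mathfrak L}_0=0$ and $\dim{\mathfrak L}=1$ are immediate, the latter being excluded because a one-dimensional Lie superalgebra has zero bracket, contradicting the non-triviality demanded of the two operations. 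Assembling the cases gives $\cdot\equiv 0$, so every such structure is trivial.
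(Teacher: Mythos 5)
Your proof is correct and takes essentially the same route as the paper, whose own ``proof'' of Lemma~\ref{superteo} is just the remark that one repeats the argument of Theorem~\ref{princth}; that repetition, with the grading carried through, is exactly what you do. Your additional care --- noting that odd multiplication operators must vanish outright and treating the degenerate case $\dim {\mathfrak L}_0 \leq 1$ against the odd part --- only fills in details the paper leaves implicit.
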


Thanks to \cite{kay09,kay10} we have the description of all 
$\frac{1}{2}$-superderivations of complex simple finite-dimensional Lie superalgebras.
All these $\frac{1}{2}$-superderivations are trivial.
Hence summarizing results of \cite{kay09,kay10} and Lemma \ref{superteo}
we have the following corollary.

\begin{corollary}\label{corlie}
There are no non-trivial transposed Poisson superalgebra structures defined on complex simple finite-dimensional Lie superalgebras.
\end{corollary}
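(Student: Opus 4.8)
The final statement (Corollary on transposed Poisson superalgebras) asserts that no non-trivial transposed Poisson superalgebra structure can be defined on a complex finite-dimensional simple Lie superalgebra. The plan is to reduce this entirely to two external ingredients: the super-analog of Theorem \ref{princth}, which is Lemma \ref{superteo}, and the classification of $\frac{1}{2}$-superderivations of simple Lie superalgebras carried out in \cite{kay09,kay10}. Concretely, I would argue as follows.

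First I would invoke Lemma \ref{superteo}: any transposed Poisson superalgebra whose Lie superalgebra part admits \emph{only} trivial $\frac{1}{2}$-superderivations is itself trivial. Thus it suffices to show that every complex finite-dimensional simple Lie superalgebra has no non-trivial $\frac{1}{2}$-superderivations. Second, I would cite the results of \cite{kay09,kay10}, where the full space of $\frac{1}{2}$-superderivations (indeed, of $\delta$-superderivations for the relevant $\delta$) of each simple finite-dimensional Lie superalgebra in the Kac classification is computed and shown to consist only of scalar multiples of the identity, i.e. trivial $\frac{1}{2}$-superderivations. Combining these two facts yields the claim: every transposed Poisson superalgebra structure on such a superalgebra is trivial, which is precisely the statement.

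The only subtlety I foresee is making sure the notion of $\frac{1}{2}$-superderivation used in \cite{kay09,kay10} matches the one implicit in the supercommutative/super-Leibniz setup of the preceding definition, in particular the correct placement of the sign $(-1)^{|x||z|}$ in the compatibility condition and hence in the defining identity $\varphi[x,y]=\frac{1}{2}\bigl([\varphi(x),y]+(-1)^{|x||\varphi|}[x,\varphi(y)]\bigr)$ for a homogeneous map $\varphi$. Since a scalar multiple of the identity is an even map of parity zero, the sign factor is trivial on it in any case, so the agreement of conventions does not affect which $\frac{1}{2}$-superderivations are deemed trivial; this is why the reduction goes through cleanly.

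I expect the main obstacle to be not in the logical structure of this corollary, which is essentially a two-line deduction, but in the correctness and completeness of the cited classification in \cite{kay09,kay10}: the argument is only as strong as the claim that \emph{every} simple finite-dimensional Lie superalgebra over $\mathbb{C}$ appears in that analysis and that in each case the $\frac{1}{2}$-superderivation space is exactly the scalars. Assuming that classification as given, no further computation is needed, and the corollary follows immediately from Lemma \ref{superteo}.
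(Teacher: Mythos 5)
Your proposal matches the paper's own argument exactly: the paper likewise combines Lemma \ref{superteo} (the super-analog of Theorem \ref{princth}) with the descriptions in \cite{kay09,kay10} showing all $\frac{1}{2}$-superderivations of complex simple finite-dimensional Lie superalgebras are trivial. Your extra remark on sign conventions is a reasonable sanity check but not needed beyond what the paper does.
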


\subsection{$n$-ary case}
The idea of the $n$-ary generalization of transposed Poisson algebra is based on the idea of $n$-Nambu-Poisson algebra, introduced by Takhtajan \cite{takh},
and it also was introduced in \cite{bai20}.
The definition of $\frac{1}{n}$-derivations of $n$-ary algebras as a particular case of $\delta$-derivations of $n$-ary algebras was given in a paper of Kaygorodov \cite{kay12izv}.

\begin{definition}
Let $n\geq 2$ be an integer. 
A transposed Poisson $n$-Lie algebra is a triple $({\mathfrak L},\cdot, [\;, \ldots, \;])$ where 
$({\mathfrak L},\cdot)$ is a commutative associative algebra and $({\mathfrak L},[\;, \ldots, \;])$ is an $n$-Lie (Filippov) algebra satisfying the following condition.
$$ n \ w \cdot [x_1, \ldots, x_n]=\sum_{i=1}^n [x_1, \ldots, w \cdot x_i, \ldots, x_n].$$
\end{definition}

\begin{remark}
The super-analog of transposed Poisson $n$-Lie algebras should be defined in the usual way
(for more information about $n$-Lie superalgebras; see, for example, \cite{caka10,Leites}).
\end{remark}

\begin{definition}
Let $({\mathfrak L},[\;, \ldots, \;])$ be an $n$-ary algebra with the multiplication $[\;, \ldots, \;]$ and $\varphi$ be a linear map.
Then $\varphi$ is a $\frac{1}{n}$-derivation if it  satisfies
\begin{equation}\varphi[x_1,\ldots, x_n]= \frac{1}{n}\sum_{i=1}^n [x_1, \ldots, \varphi( x_i), \ldots, x_n],\;\;\forall x_1,\ldots, x_n \in L.\end{equation}
The main example of $\frac{1}{n}$-derivations is the multiplication on an element from the basic field.
Let us call such $\frac{1}{n}$-derivations as trivial
$\frac{1}{n}$-derivations. 
\end{definition}

Now we are ready to talk about $n$-ary analogs of Corollary \ref{corlie}.

\begin{theorem}
There are no non-trivial transposed Poisson $n$-Lie algebra (or superalgebra) structures defined on 
complex simple finite-dimensional $n$-Lie algebras (or superalgebras).
\end{theorem}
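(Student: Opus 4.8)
The plan is to run the same three-step argument used for Lie algebras in the binary and super cases, now in the $n$-ary setting. First I would establish the $n$-ary analog of Lemma~\ref{glavlem}: if $({\mathfrak L},\cdot,[\;,\ldots,\;])$ is a transposed Poisson $n$-Lie algebra, then for every fixed $w\in{\mathfrak L}$ the right multiplication $R_w(x)=w\cdot x$ is a $\frac{1}{n}$-derivation of the $n$-Lie algebra $({\mathfrak L},[\;,\ldots,\;])$. This is immediate: using commutativity of $\cdot$, the defining compatibility identity
$$n\,w\cdot[x_1,\ldots,x_n]=\sum_{i=1}^n[x_1,\ldots,w\cdot x_i,\ldots,x_n]$$
reads verbatim as $R_w[x_1,\ldots,x_n]=\frac{1}{n}\sum_{i=1}^n[x_1,\ldots,R_w(x_i),\ldots,x_n]$, which is exactly the definition of a $\frac{1}{n}$-derivation.

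Second, I would prove the $n$-ary analog of Theorem~\ref{princth}: an $n$-Lie algebra with no non-trivial $\frac{1}{n}$-derivations carries only the trivial transposed Poisson $n$-Lie structure. Assuming every $\frac{1}{n}$-derivation is a scalar, Step~1 gives $R_w=\kappa_w\,\mathrm{id}$ for some $\kappa_w\in{\mathbb C}$ and all $w$. Commutativity of $\cdot$ then forces $\kappa_w x=w\cdot x=x\cdot w=\kappa_x w$ for all $x,w$; choosing $x,w$ linearly independent (possible since a simple $n$-Lie algebra has dimension $\geq n+1>1$) yields $\kappa_x=\kappa_w=0$, hence $x\cdot w=0$ identically and the associative product is trivial.

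Third, I would feed in the classification input. Up to isomorphism there is a unique simple finite-dimensional $n$-Lie algebra over ${\mathbb C}$ (the $(n+1)$-dimensional one), and its $\delta$-derivations were computed by Kaygorodov \cite{kay12izv}; in particular it has no non-trivial $\frac{1}{n}$-derivations. Combining this with Steps~1--2 gives the statement for $n$-Lie algebras. For the superalgebra case I would repeat the argument with the super right multiplication and the super versions of the two auxiliary lemmas, and then invoke the analogous super-classification of $\frac{1}{n}$-superderivations of simple finite-dimensional $n$-Lie superalgebras.

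The routine part is genuinely routine: Steps~1 and~2 are one-line translations of the binary proofs. The real content sits in the cited computation of $\frac{1}{n}$-(super)derivations of the simple algebras, and within our own write-up the only delicate point is the super case, where I would have to check that the signs $(-1)^{|x||z|}$ appearing in the super-compatibility identity line up exactly with the Koszul signs in the definition of a $\frac{1}{n}$-superderivation, so that the super right multiplication really is a $\frac{1}{n}$-superderivation. That sign bookkeeping is the main obstacle I expect.
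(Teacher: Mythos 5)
Your proposal is correct and its skeleton is the same as the paper's: right multiplications are $\frac{1}{n}$-derivations, an $n$-ary analog of Theorem \ref{princth}, and then the classification input that the unique complex simple finite-dimensional $n$-Lie algebra ($n>2$) is the $(n+1)$-dimensional algebra $A_{n+1}$, whose $\frac{1}{n}$-derivations are all trivial (the paper cites \cite{kay14mz} for this last fact, you cite \cite{kay12izv}; both are harmless). The one real divergence is the super case. You plan a parallel super-argument — super right multiplications, Koszul-sign bookkeeping, and an appeal to a separate ``super-classification of $\frac{1}{n}$-superderivations'' — and you flag the sign-matching as the main obstacle. The paper avoids all of this: by Cantarini--Kac \cite{caka10}, every complex simple finite-dimensional $n$-Lie \emph{superalgebra} with $n>2$ is again isomorphic to $A_{n+1}$, i.e.\ it is purely even, so the superalgebra statement collapses to the algebra statement and no sign analysis or separate superderivation computation is needed. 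Your route would still go through (the super-classification you invoke exists precisely in this degenerate form), but the obstacle you anticipate is vacuous. One further point to make explicit in a write-up: the uniqueness of the simple $n$-Lie algebra holds only for $n>2$ (Ling \cite{ling}); for $n=2$ the statement is the earlier binary theorem on semisimple Lie algebras, so the $n$-ary theorem should be read, as in the paper, with $n>2$.
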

\begin{proof}
Thanks to \cite{caka10}, every complex simple finite-dimensional $n$-Lie $(n>2)$ algebra (or superalgebra) 
is isomorphic to the $(n+1)$-dimensional $n$-ary algebra $A_{n+1}.$
All $\frac{1}{n}$-derivations of $A_{n+1}$ are trivial \cite{kay14mz}.
Then following the proof of Theorem \ref{princth} we have that the transposed Poisson $n$-Lie algebra is trivial.
\end{proof}

Since every complex semisimple finite-dimensional $n$-Lie algebra is a direct sum of simple algebras \cite{ling},
and as follows, every $\frac{1}{n}$-derivation of this semisimple algebra will be invariant on all simple algebras from this direct sum \cite{kay14mz}, we then have the following corollary.

\begin{corollary}
There are no non-trivial transposed Poisson $n$-Lie algebra structures defined on a complex semisimple finite-dimensional $n$-Lie algebra.
\end{corollary}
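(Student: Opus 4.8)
The plan is to reduce the semisimple case to the simple case settled in the previous theorem, using the direct sum decomposition guaranteed by Ling's theorem \cite{ling}. First I would write $\mathfrak{L} = \bigoplus_{i} \mathfrak{S}_i$ as a direct sum of simple $n$-Lie ideals and fix an arbitrary transposed Poisson $n$-Lie structure $(\mathfrak{L}, \cdot, [\;,\ldots,\;])$ on it. Exactly as in the binary Lemma \ref{glavlem}, the compatibility condition shows at once that each right multiplication $R_w : x \mapsto w \cdot x$ of the commutative associative algebra $(\mathfrak{L}, \cdot)$ is a $\frac{1}{n}$-derivation of the $n$-Lie algebra $(\mathfrak{L}, [\;,\ldots,\;])$; this $n$-ary analog of Lemma \ref{glavlem} is the only new elementary verification and is a one-line consequence of the defining identity.

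Next I would invoke the two cited structural facts about $\frac{1}{n}$-derivations of semisimple $n$-Lie algebras \cite{kay14mz}: every such $\frac{1}{n}$-derivation leaves each simple summand $\mathfrak{S}_i$ invariant, and its restriction to $\mathfrak{S}_i$ is a $\frac{1}{n}$-derivation of the simple algebra $\mathfrak{S}_i$, which by the classification is trivial, i.e.\ a scalar multiple of the identity. Applying this to each $R_w$ yields scalars $\mu_i(w)$ with $w \cdot x = \mu_i(w)\, x$ for all $x \in \mathfrak{S}_i$. The argument then splits into two cases handled just as in Theorem \ref{princth}. For $x \in \mathfrak{S}_i$ and $y \in \mathfrak{S}_j$ with $i \neq j$ I have $x \cdot y = \mu_i(y)\, x \in \mathfrak{S}_i$ while, by commutativity, $x \cdot y = y \cdot x = \mu_j(x)\, y \in \mathfrak{S}_j$, so that $x \cdot y \in \mathfrak{S}_i \cap \mathfrak{S}_j = 0$ since the sum is direct. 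For $x, y$ in the same summand $\mathfrak{S}_i$ the identity $\mu_i(y)\, x = x \cdot y = y \cdot x = \mu_i(x)\, y$ together with $\dim \mathfrak{S}_i > 1$ (which lets me pick a linearly independent partner for any nonzero element) forces $\mu_i \equiv 0$ on $\mathfrak{S}_i$, hence $x \cdot y = 0$ there too; equivalently one notes that $(\mathfrak{S}_i, \cdot, [\;,\ldots,\;])$ is itself a transposed Poisson $n$-Lie structure on a simple algebra and is trivial by the previous theorem. Combining the two cases gives $\cdot \equiv 0$, so the structure is trivial.

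The only genuine content beyond routine bookkeeping is the reduction to scalar actions on each summand, so the main obstacle is making sure the invariance and triviality results of \cite{kay14mz} apply uniformly to every right multiplication $R_w$; once this is granted, the commutativity identity $R_y(x) = R_x(y)$ does all the remaining work and the argument mirrors the binary proof of Theorem \ref{princth} summand by summand. A minor point to verify is that each simple summand has dimension at least two, which holds because the simple $n$-Lie algebras are the $(n+1)$-dimensional algebras $A_{n+1}$ with $n \geq 2$; this is what legitimizes choosing linearly independent elements in the same-summand case.
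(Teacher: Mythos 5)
Your proof is correct and follows essentially the same route as the paper: Ling's decomposition of the semisimple $n$-Lie algebra into simple ideals, the invariance and triviality of $\frac{1}{n}$-derivations on each simple summand from \cite{kay14mz}, and the mechanism of Theorem \ref{princth} applied to the right multiplications $R_w$. The only difference is presentational: you spell out the summand-by-summand computation (the mixed case $x\in\mathfrak{S}_i$, $y\in\mathfrak{S}_j$ and the same-summand case) that the paper compresses into a one-sentence citation of the preceding theorem.
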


\section{$\frac{1}{2}$-derivations of Witt type algebras 
and transposed Poisson algebras}

In the present section we consider the Witt algebra and one of the more interesting generalizations of the Witt algebra. 
It is proven that the Witt algebra admits many nontrivial structures of transposed Poisson algebras
and the algebra ${\mathcal W}(a,b)$ does not admit structures of transposed Poisson algebras if and only if $b\neq -1.$
All transposed Poisson algebra structures defined on ${\mathcal W}(a,-1)$ were described. 

\subsection{$\frac{1}{2}$-derivations of the Witt algebra 
and transposed Poisson algebras}
Let ${\mathcal L}$ be the algebra of Laurent polynomials with generators $\{ e_i\}_{i \in \mathbb Z}$ and the multiplication $e_ie_j=e_{i+j}.$
The algebra of derivations of ${\mathcal L}$ gives the well-known Witt algebra. 
It is the algebra ${\mathfrak L}$ with generators $\{ e_i\}_{i \in \mathbb Z}$ and the multiplication given by the following way $[e_i,e_j]=(i-j)e_{i+j}.$ 
By \cite[Theorem 5]{fil2}, 
the space of $\frac{1}{2}$-derivations of the Witt algebra 
${\mathfrak L}$ gives an associative commutative algebra.
Let us denote the associative algebra of 
$\frac{1}{2}$-derivations of the Witt algebra 
${\mathfrak L} = \mathfrak{Der} ({\mathcal L})$ by $\Delta_{\frac{1}{2}}(\mathfrak{Der} ({\mathcal L})).$
 The following theorem is a particular case of 
 \cite[Theorem 2.1]{zus10}, but our proof is useful for the future consideration.

\begin{theorem}\label{teo_witt}
Let $\varphi$ be a $\frac{1}{2}$-derivation of the Witt algebra ${\mathfrak L}.$
Then there is a set $\{ \alpha_i \}_{i \in {\mathbb Z}}$ of elements from the basic field, such that $\varphi (e_i)=\sum\limits_{j \in \mathbb{Z}} \alpha_{j} e_{i+j}.$
Every finite set $\{ \alpha_i \}_{i \in {\mathbb Z}}$ of elements from the basic field gives a $\frac{1}{2}$-derivation of ${\mathfrak L}.$ 
In particular, $\Delta_{\frac{1}{2}}( {\mathfrak Der} ( \mathcal L)) \cong \mathcal L.$
\end{theorem}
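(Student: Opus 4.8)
The plan is to work entirely in coordinates with respect to the basis $\{e_i\}$. I would write an arbitrary linear map as $\varphi(e_i)=\sum_{k\in\mathbb{Z}} c_{i,k}\,e_k$, where for each fixed $i$ only finitely many $c_{i,k}$ are nonzero, since $\varphi(e_i)$ lies in $\mathfrak{L}$. First I would expand the defining identity $\varphi[e_i,e_j]=\frac{1}{2}\bigl([\varphi(e_i),e_j]+[e_i,\varphi(e_j)]\bigr)$ using $[e_i,e_j]=(i-j)e_{i+j}$ and compare the coefficient of a fixed basis vector $e_n$ on both sides. This yields, for all $i,j,n\in\mathbb{Z}$, the scalar relation
\[
2(i-j)\,c_{i+j,n}=(n-2j)\,c_{i,n-j}+(2i-n)\,c_{j,n-i}.
\]
The key observation is that this relation couples only coefficients sitting at the same ``degree'' $k-i$: setting $a^{(d)}_i:=c_{i,i+d}$ and specializing $n=i+j+d$, it becomes
\[
2(i-j)\,a^{(d)}_{i+j}=(i-j+d)\,a^{(d)}_i+(i-j-d)\,a^{(d)}_j,
\]
a family of functional equations indexed by $d$ that can be solved separately for each $d$.

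The heart of the argument is to show that each sequence $\{a^{(d)}_i\}_i$ is constant in $i$. Setting $j=0$ collapses the reindexed equation to $(i-d)\,a^{(d)}_i=(i-d)\,a^{(d)}_0$, which forces $a^{(d)}_i=a^{(d)}_0$ for every $i\neq d$. The only value left undetermined is the diagonal coefficient $a^{(d)}_d$, and I would pin it down by choosing integers $i,j$ with $i+j=d$, $i\neq j$, and $i,j\neq d$ (such a pair always exists for every $d$); the equation then reduces to $2(i-j)\,a^{(d)}_d=2(i-j)\,a^{(d)}_0$, so $a^{(d)}_d=a^{(d)}_0$ as well. Writing $\alpha_d:=a^{(d)}_0=c_{0,d}$ gives $c_{i,k}=\alpha_{k-i}$, that is $\varphi(e_i)=\sum_j \alpha_j e_{i+j}$, with only finitely many nonzero $\alpha_j$ inherited from $\varphi(e_0)$. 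For the converse, substituting $a^{(d)}_i\equiv\alpha_d$ back into the reindexed equation gives $2(i-j)\alpha_d=(i-j+d)\alpha_d+(i-j-d)\alpha_d$, an identity; hence every finite family $\{\alpha_j\}$ does define a $\frac{1}{2}$-derivation.

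Finally I would identify the algebra structure. Denoting by $\varphi_f$ the $\frac{1}{2}$-derivation attached to $f=\sum_j\alpha_j e_j\in\mathcal{L}$, the formula $\varphi_f(e_i)=\sum_j\alpha_j e_{i+j}=f\cdot e_i$ shows that $\varphi_f$ is exactly multiplication by $f$ in $\mathcal{L}$. Thus $f\mapsto\varphi_f$ is a linear isomorphism $\mathcal{L}\to\Delta_{\frac{1}{2}}(\mathfrak{Der}(\mathcal{L}))$, and since $\varphi_f\circ\varphi_g(e_i)=f\cdot g\cdot e_i=\varphi_{fg}(e_i)$, it carries the composition product (the commutative associative product on $\frac{1}{2}$-derivations supplied by \cite[Theorem 5]{fil2}) to Laurent multiplication, giving the claimed algebra isomorphism. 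The main obstacle is the middle step: the recurrence does not determine the diagonal coefficient $a^{(d)}_d$ from the $j=0$ specialization alone, and the nontrivial point is to exhibit an auxiliary specialization, here $i+j=d$ with $i\neq j$, that is available for every degree $d$ and closes this gap.
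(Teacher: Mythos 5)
Your proof is correct, and at its core it is the same computation as the paper's: expand $\varphi$ in the basis $\{e_i\}$ and exploit brackets with $e_0$. The important difference is that yours is complete where the paper's is not. The paper uses only the relation $2\varphi[e_i,e_0]=[\varphi(e_i),e_0]+[e_i,\varphi(e_0)]$, whose coefficient-wise content is $(2i-j)\alpha_{i,j}=(2i-j)\alpha_{0,j-i}$; this pins down $\alpha_{i,j}=\alpha_{0,j-i}$ only for $j\neq 2i$ and degenerates to $0=0$ at $j=2i$, which is exactly your diagonal coefficient $a^{(d)}_d=c_{d,2d}$. The paper passes from this relation to ``$\varphi(e_i)=\sum_j\alpha_{0,j-i}e_j$'' without comment, so the obstacle you flag as the nontrivial point is a genuine gap in the published argument, and your auxiliary specialization $i+j=d$, $i\neq j$, $i,j\neq d$ (available for every $d$, e.g.\ $i$ large and $j=d-i$) repairs it. You also organize the computation well by observing that the full $\frac{1}{2}$-derivation identity decouples by degree $d=k-i$, which makes the converse direction a one-line substitution; the paper instead merely asserts that the converse is easy to see. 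Finally, for the isomorphism $\Delta_{\frac{1}{2}}(\mathfrak{Der}(\mathcal{L}))\cong\mathcal{L}$, the paper exhibits the shift operators $\varphi_+,\varphi_0,\varphi_-$ and claims they generate the algebra of $\frac{1}{2}$-derivations, whereas you write down the explicit map $f\mapsto\varphi_f$ (multiplication by $f$) and verify $\varphi_f\circ\varphi_g=\varphi_{fg}$; your version is more direct, identifies the product on $\frac{1}{2}$-derivations concretely as composition, and in fact re-proves for this algebra the commutativity and associativity that the paper imports from Filippov's theorem.
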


\begin{proof}
Let $\varphi$ be a $\frac{1}{2}$-derivation of ${\mathfrak L}$ and 
$\varphi(e_i)=\sum\limits_{j \in \mathbb{Z}} \alpha_{i,j} e_j,$
then 
$$2i\sum\limits_{j \in \mathbb{Z}} \alpha_{i,j} e_j=2i\varphi(e_i)=2\varphi[e_i,e_0]=[\varphi(e_i),e_0]+[e_i,\varphi(e_0)]=$$ 
$$\sum\limits_{j \in \mathbb{Z}} \alpha_{i,j}[e_j, e_0]+\sum\limits_{j \in \mathbb{Z}} \alpha_{0,j} [e_i,e_j]=
\sum\limits_{j \in \mathbb{Z}} \alpha_{i,j}je_j +\sum\limits_{j \in \mathbb{Z}} \alpha_{0,j}(i-j) e_{i+j}.$$

Now, for every number $i$ we have 
$\varphi(e_i)=\sum\limits_{j \in \mathbb{Z}} \alpha_{0,j-i} e_j.$
It is easy to see that for every finite set of numbers $\{ \alpha_{j} \}_{j \in {\mathbb Z}}$
the linear mapping $\varphi$ defined by 
$\varphi(e_i)=\sum\limits_{j \in \mathbb{Z}} \alpha_{j} e_{j+i}$ is a $\frac{1}{2}$-derivation of the Witt algebra $\mathfrak L.$

Now we consider three special types of $\frac{1}{2}$-derivations.
Define 
$$\varphi_{+}(e_i)=e_{i+1}, \ \varphi_{0}(e_i)=e_{i}, \ \varphi_{-}(e_i)=e_{i-1}.$$
It is easy to see that 
$$\varphi_{+}^k(e_i)= e_{i+k}, \ \varphi_+\varphi_-(e_i)=\varphi_-\varphi_+(e_i)=e_i, \ \varphi_{-}^k(e_i)=e_{i-k}.$$
It follows that the subalgebra generated by $\{ \varphi_{+},\varphi_{0}, \varphi_{-} \}$ coincides with the algebra of $\frac{1}{2}$-derivations of the Witt algebra $\mathfrak L.$
Now we have that the algebra of $\frac{1}{2}$-derivations of $\mathfrak L$ is isomorphic to $\mathcal L.$
\end{proof}

\begin{theorem}\label{teowitt}
Let $(\mathfrak{L}, \cdot, [\;,\;])$ be a transposed Poisson algebra structure defined on the Witt algebra $(\mathfrak{L}, [\;,\;])$.
Then $(\mathfrak{L}, \cdot, [\;,\;])$ is not Poisson algebra and 
 $(\mathfrak{L}, \cdot)$  is a mutation of the algebra of Laurent polynomials.
On the other hand, every mutation of the algebra of Laurent polynomials gives a transposed Poisson algebra structure with the Lie part isomorphic to the Witt algebra.

\end{theorem}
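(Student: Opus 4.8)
The plan is to exploit Lemma \ref{glavlem} together with the explicit description of $\frac{1}{2}$-derivations in Theorem \ref{teo_witt}. First I would take an arbitrary transposed Poisson structure $(\mathfrak{L},\cdot,[\;,\;])$ on the Witt algebra and, for each basis element $e_k$, consider the right multiplication $R_{e_k}$. By Lemma \ref{glavlem} each $R_{e_k}$ is a $\frac{1}{2}$-derivation of the Witt algebra, so Theorem \ref{teo_witt} forces $R_{e_k}(e_i)=e_i\cdot e_k=\sum_{j}\alpha^{(k)}_{j}e_{i+j}$ for a finite family $\{\alpha^{(k)}_j\}$. The commutativity $e_i\cdot e_k=e_k\cdot e_i$ then yields, on comparing coefficients, the relation $\alpha^{(k)}_{s}=\alpha^{(0)}_{s-k}$; writing $\beta_m:=\alpha^{(0)}_m$ this collapses the whole multiplication to $e_i\cdot e_k=\sum_{m}\beta_m e_{i+k+m}$ with $\{\beta_m\}$ finite. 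Setting $p=\sum_m\beta_m e_m\in\mathcal L$, this is precisely $e_i\cdot_p e_k=e_i\cdot p\cdot e_k$ in the Laurent algebra, so $(\mathfrak L,\cdot)$ is the mutation $\mathcal L(p)$. Associativity and commutativity of $\cdot$ then hold automatically, so no further constraints arise; this is the structural heart of the statement.

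For the converse direction I would argue almost for free from the mutation lemma. The Laurent algebra itself carries the transposed Poisson structure $(\mathcal L,\cdot,[\;,\;])$, since $R_{e_k}\colon e_i\mapsto e_{i+k}$ is one of the $\frac{1}{2}$-derivations exhibited in the proof of Theorem \ref{teo_witt}. Hence, by the mutation lemma, every mutation $\mathcal L(p)$ keeping the Witt bracket is again a transposed Poisson algebra, which gives exactly the asserted family with Lie part isomorphic to the Witt algebra.

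To see that such a structure is never a Poisson algebra, I would test the genuine Leibniz rule $[x,y\cdot z]=[x,y]\cdot z+y\cdot[x,z]$ on $x=e_i$ and $y=z=e_0$. The left side is $[e_i,p]=\sum_m\beta_m(i-m)e_{i+m}$, while commutativity makes the right side $2i\sum_m\beta_m e_{i+m}$; equating coefficients gives $\beta_m(i+m)=0$ for every $i$, which forces $p=0$ and hence the zero product. Since $\cdot$ is by hypothesis a nonzero operation, the algebra cannot be Poisson.

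The remaining, and to my mind most delicate, point is the simplicity of $(\mathfrak L,\cdot)$. Here I would study the ideals of the mutation $\mathcal L(p)$. When $p$ is a unit of $\mathcal L$ (a scalar multiple of a single $e_n$) the map $x\mapsto p\,x$ is an isomorphism $\mathcal L(p)\cong\mathcal L$, and I would deduce simplicity from the fact that $\mathcal L$ is graded-simple: every homogeneous component $\mathbb C e_i$ consists of invertible elements, so any nonzero graded ideal already contains some $e_i$ and therefore all of $\mathcal L$. The obstacle is to carry the simplicity assertion uniformly across the whole family of mutations and to fix the precise sense of ``simple'' being used, since for a non-invertible $p$ the product loses homogeneity; this is the step I expect to require the most care, and it is where I would concentrate the argument by identifying exactly which ideals of $\mathcal L$ survive as ideals of $\mathcal L(p)$.
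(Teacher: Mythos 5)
Your first three steps are essentially the paper's own proof, and in two places you actually do more than the paper. The structural core coincides exactly: Lemma \ref{glavlem} makes every right multiplication a $\frac{1}{2}$-derivation, Theorem \ref{teo_witt} gives its form, commutativity forces $\alpha^{(k)}_s=\alpha^{(0)}_{s-k}$, and the product collapses to $e_i\cdot e_j=e_ie_jw$ with $w=\sum_m\beta_m e_m$, i.e.\ a mutation of $\mathcal L$; the paper then checks associativity directly, just as you observe it to be automatic. For the converse, the paper merely asserts that every $w$ works, while your derivation from the mutation lemma applied to the base structure $(\mathcal L,\cdot,[\;,\;])$ (valid because each $R_{e_k}$ is a shift, hence one of the $\frac{1}{2}$-derivations of Theorem \ref{teo_witt}) is a genuine argument and is cleaner. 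For the non-Poisson claim the paper's proof is silent, whereas your test of the Leibniz rule at $x=e_i$, $y=z=e_0$, yielding $\beta_m(i+m)=0$ for all $i$, hence $w=0$, contradicting the nonzeroness of $\cdot$, settles it completely and correctly.

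The gap is exactly where you located it: simplicity of the mutation. The paper does not prove this step either; it disposes of it in one line by citing \cite{elduque91}. Be aware that your partial argument for a unit $w$ establishes only \emph{graded} simplicity, not simplicity: $\mathcal L\cong\mathbb C[t,t^{-1}]$ is far from simple as an ungraded associative algebra. And your suspicion about general $w$ is well founded, because in the ordinary sense the assertion cannot be rescued by a sharper ideal analysis: if $I$ is any associative ideal of $\mathcal L$ (for instance $(e_1-e_0)\mathcal L$), then for $a\in I$ and $x\in\mathcal L$ the mutation product satisfies $a\cdot x=awx\in I$, so $I$ remains a proper nonzero ideal of every mutation; moreover $w\mathcal L$ itself is a proper ideal of the mutation whenever $w$ is not invertible. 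Thus the step you flagged as the most delicate is not one you could have closed as stated; completing the theorem requires either the precise (weaker) notion of simplicity used in \cite{elduque91} or a weakening of the claim (e.g.\ graded simplicity for homogeneous $w$). In short, your proposal reproduces everything the paper actually argues, and is incomplete only on the one claim that the paper itself delegates to a citation.
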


\begin{proof}
We aim to describe the multiplication $\cdot.$
By Lemma \ref{glavlem}, 
for every element $e_n$ there is a related $\frac{1}{2}$-derivation $\varphi_n$ of $(\mathfrak{L}, [\;,\;]),$
such that $\varphi_j(e_i)=e_i \cdot e_j = \varphi_i(e_j),$
where $\varphi_i(e_j)= \sum \limits_{k \in {\mathbb Z}} \alpha_{i,k} e_{j+k}.$
Hence, 
$$ \sum\limits_{k \in {\mathbb Z}} \alpha_{j,k-i} e_{k}= \varphi_j(e_i)=e_i \cdot e_j = \varphi_i(e_j)= \sum\limits_{k \in {\mathbb Z}} \alpha_{i,k-j} e_{k} $$
and $\alpha_{i,j}=\alpha_{0,j-i}.$
It follows that 
\begin{equation}\label{product}
 e_i \cdot e_j = \sum\limits_{k \in {\mathbb Z}} \alpha_{k} e_{i+j+k}= e_ie_j w,
\end{equation}
where $w=\sum\limits_{k \in {\mathbb Z}} \alpha_{k} e_k.$
Now, analyzing the associative law of $\cdot,$ we have the following:
$$(e_i \cdot e_j) \cdot e_m= \sum\limits_{k \in {\mathbb Z}} \alpha_{k} e_{i+j+k} \cdot e_m=
 \sum\limits_{k \in {\mathbb Z}} \alpha_{k} \left( \sum\limits_{k^* \in {\mathbb Z}} \alpha_{k^*} e_{i+j+m+k+k^*}\right)=e_i \cdot (e_j \cdot e_m),$$
which implies that the multiplication $\cdot$ defined by (\ref{product}) is associative.
The algebra $({\mathfrak L}, \cdot)$ is a mutation of $\mathcal L.$
On the other hand, it is easy to see that for any element $w$ the product (\ref{product}) gives a transposed Poisson algebra structure defined on the Witt algebra $(\mathfrak{L}, [\;,\;])$.
It is easy to see that $  \mathfrak{L} \cdot [  \mathfrak{L},\mathfrak{L}]\neq 0$
and, by \cite[Proposition 2.4]{bai20},  $(\mathfrak{L}, \cdot, [\;,\;])$ is non-Poisson.
It gives the complete statement of the theorem. 
\end{proof}

\subsection{$\frac{1}{2}$-derivations of the Lie algebra $\mathcal{W}(a,b)$ and transposed Poisson algebras}
In \cite{13,16}, a class of representations $I(a, b) = \bigoplus\limits_{m \in {\mathbb Z}} {\mathbb C} I_m$ for the Witt algebra ${\bf W}$ with two
complex parameters $a$ and $b$ had been introduced. 
The action of ${\bf W}$ on $I(a, b)$ is given by
$L_m \cdot I_n = -(n + a + bm)I_{m+n}.$
$I(a, b)$ is the so called tensor density module. The Lie algebra $\mathcal{W}(a,b)={\bf W} \ltimes I(a, b),$ where ${\bf W}$ is the Witt algebra
and $I(a, b)$ is the tensor density module of ${\bf W}.$
Many maps, such that commuting maps, biderivations, $2$-local derivations, and also commutative post-Lie algebra structures
of the algebra ${\mathcal W}(a,b)$ were studied in \cite{han16, tang18, tang20}.

\begin{definition}The Lie algebra $\mathcal{W}(a,b)$ is spanned by generators 
$\{L_i, \ I_j \}_{ i,j \in \mathbb{Z}}$. 
These generators satisfy 
\[ [L_m, L_n] = (m -n)L_{m+n}, \ [L_m, I_n] = -(n + a + b m)I_{m+n}. \] 
\end{definition}

\begin{theorem}
There are no non-trivial $\frac{1}{2}$-derivations of the Lie algebra $\mathcal{W}(a,b)$ for $b\neq -1.$
Let $\varphi$ be a $\frac{1}{2}$-derivation of the algebra $\mathcal{W}(a,-1),$ then 
there are two finite sets of elements from the basic field $\{ \alpha_t \}_{t \in {\mathbb Z}}$ and $\{ \beta_t \}_{t \in {\mathbb Z}},$
such that 
$\varphi(L_m) = \sum\limits_{t \in {\mathbb Z}} \alpha_t L_{m+t} + \sum\limits_{t \in {\mathbb Z}} \beta_t I_{m+t}$ and $\varphi (I_m)= \sum\limits_{t \in {\mathbb Z}} \alpha_t I_{m+t}.$

\end{theorem}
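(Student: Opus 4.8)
The plan is to exploit the $\mathbb{Z}$-grading of $\mathcal{W}(a,b)$ in which $L_m$ and $I_m$ both have degree $m$. The defining identity of a $\frac{1}{2}$-derivation is homogeneous for this grading, so each homogeneous component $\varphi_s$ (the part raising degree by $s$) of a $\frac{1}{2}$-derivation is again a $\frac{1}{2}$-derivation; thus I would first reduce to a homogeneous $\varphi$ of some degree $s$ and recover the general case by summing over $s$ at the end. A degree-$s$ map necessarily has the shape $\varphi(L_m)=\alpha_m L_{m+s}+\beta_m I_{m+s}$ and $\varphi(I_m)=\gamma_m L_{m+s}+\delta_m I_{m+s}$ for scalars $\alpha_m,\beta_m,\gamma_m,\delta_m$. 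The finiteness of the sets $\{\alpha_t\},\{\beta_t\}$ asserted in the statement comes for free at the end, since $\varphi(L_0)$ must be a genuine (finitely supported) element of $\mathcal{W}(a,b)$.

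Next I would feed this ansatz into the $\frac{1}{2}$-derivation identity for the three essential brackets $[L_m,L_n]=(m-n)L_{m+n}$, $[L_m,I_n]=-(n+a+bm)I_{m+n}$ and $[I_m,I_n]=0$, comparing $L$- and $I$-coefficients separately to obtain four scalar recursions. The first goal is to show that the $L$-component of $\varphi(I_m)$ vanishes, i.e. $\gamma_m\equiv 0$. Specializing the $[L_m,I_n]$-relation at $m=0$ gives $\gamma_n(s-n-2a)=0$, so $\gamma_n$ can be nonzero only at the single index $n_0=s-2a$; substituting this into the $[I_m,I_n]=0$ relation, which reads $\gamma_m(n+a+bm+bs)=\gamma_n(m+a+bn+bs)$, and letting $m$ vary with $n=n_0$ fixed forces $\gamma_{n_0}=0$ as well. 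With $\gamma\equiv 0$ I would then observe that the $L$-part of the $[L_m,L_n]$-relation says precisely that $L_m\mapsto\alpha_m L_{m+s}$ is a $\frac{1}{2}$-derivation of the Witt algebra, so Theorem \ref{teo_witt} gives $\alpha_m=\alpha$ constant.

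It remains to pin down $\beta_m$ and $\delta_m$. Setting $m=0$ (equivalently $n=0$) in the two surviving recursions yields the closed forms $\beta_n(s+a-n)=\beta_0(s+a+bn)$ and $\delta_n(n+a-s)=\alpha(n+a+bs)$, which express both coefficients as explicit rational functions of $n$. The decisive step is to reinsert these into the full, unspecialized recursions and compare the leading (linear-in-$m$) coefficients of the two sides: the $\delta$-recursion then collapses to the condition $\alpha\,s(1+b)=0$ and the $\beta$-recursion to $\beta_0\,n(1+b)=0$ for all $n$. If $b\neq -1$ this forces $\beta_m\equiv 0$ and kills the diagonal coefficient $\alpha$ in every degree $s\neq 0$, so the only surviving component is the degree-$0$ scalar with $\alpha_m=\delta_m=\alpha$ and $\beta=0$; hence $\varphi=\alpha\cdot\mathrm{id}$ is trivial. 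If $b=-1$ the obstructions vanish identically, leaving $\alpha_m=\delta_m=\alpha$ (so the $L$- and $I$-diagonal coefficients coincide, exactly as in the statement) and $\beta_m=\beta$ an arbitrary constant; summing over $s$ and relabeling $\alpha$ as $\alpha_t$ and $\beta$ as $\beta_t$ reproduces the asserted description $\varphi(L_m)=\sum_t\alpha_t L_{m+t}+\sum_t\beta_t I_{m+t}$ and $\varphi(I_m)=\sum_t\alpha_t I_{m+t}$.

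The main obstacle I expect is this reinsertion step: confirming that the full recursions are consistent exactly when $1+b$ annihilates $\alpha$ and $\beta_0$, which amounts to a rational-function identity in the two integer variables $m,n$. Care is also needed at the degenerate indices where the denominators $s+a-n$ or $n+a-s$ vanish (when $s\pm a$ happens to be an integer) and at the single exceptional value $n_0=s-2a$ used to annihilate $\gamma$; in each case the offending coefficient must be recovered from a second specialization of the recursions rather than from the rational formula. Finally, the value $b=0$ is even easier, since then the structure constant $n+a+bm$ loses its $m$-dependence and the constancy of $\delta$ is immediate; none of these special features disturbs the dichotomy governed by the factor $1+b$.
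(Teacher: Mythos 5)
Your proposal is correct, and it takes a genuinely different route from the paper's. The paper splits $\varphi$ along the $\mathbb{Z}_2$-grading $\mathcal{W}(a,b)=\langle L_i\rangle\oplus\langle I_i\rangle$ into a parity-preserving part and a parity-reversing part; the first is handled by Theorem \ref{teo_witt} plus the relation (\ref{wabeq}) extracted from $\varphi[L_m,I_{-m}]$, and the second is killed (for $b\neq-1$) by the trick that the commutator of a $\frac{1}{2}$-derivation with the inner derivation $\mathbb{L}_{I_m}$ is again a $\frac{1}{2}$-derivation, hence trivial. You instead split along the $\mathbb{Z}$-grading, so each homogeneous component is governed by four scalar sequences; you eliminate the $I\to L$ coefficients $\gamma$ using only the $m=0$ specialization of the $[L_m,I_n]$-identity together with $[I_m,I_n]=0$ (this replaces the paper's commutator trick and works uniformly in $b$ and in both cases of the theorem), you get constancy of the $\alpha$'s from Theorem \ref{teo_witt} just as the paper does, and your reinsertion step plays the role of the paper's analysis of (\ref{wabeq}). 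I checked the reinsertion: the $\delta$-recursion obstruction factors as $\alpha\,s(1+b)\,m\bigl(bm-(n+a)(b-2)+bs\bigr)$ and the $\beta$-recursion obstruction as $\beta_0(1+b)(m-n)mn\bigl(b(m+n)-(s+a)(b-2)\bigr)$, so the dichotomy is governed by the factor $1+b$ exactly as you assert, and for $b=-1$ both obstructions vanish identically, leaving $\delta_n=\alpha$, $\beta_n=\beta_0$ and hence the stated form after summing over degrees. Two caveats, both inside the degenerate-case care you already promise: the obstruction is quadratic, not linear, in $m$, and its top coefficient alone gives only $\alpha b s(1+b)=0$, so you need the full polynomial (or your separate $b=0$ remark); and in the $\beta$-recursion the case $b=0$, $s+a=0$ makes the displayed obstruction vanish identically, so there $\beta_0=0$ must be forced from the relation at $m+n=0$, which does work. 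The trade-off: your argument is more elementary and uniform (scalar recursions, a visible factor $1+b$, no appeal to the auxiliary fact about commutators with derivations), while the paper's $\mathbb{Z}_2$-splitting leaves only two pieces to control and its commutator trick is recycled later for the super Virasoro and $N=2$ superconformal algebras.
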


\begin{proof}
Let $\varphi$ be a $\frac{1}{2}$-derivation of $\mathcal{W}(a,b).$
As $\mathcal{W}(a,b)$ is a $\mathbb{Z}_2$-graded algebra, we can consider $\varphi$ as a sum of two $\frac{1}{2}$-derivations $\varphi_0$ and $\varphi_1,$
such that
\[
\varphi_0( \langle L_i \rangle_{i \in {\mathbb Z}} ) \subseteq \langle L_i \rangle_{i \in {\mathbb Z}}, \ 
\varphi_0( \langle I_i \rangle_{i \in {\mathbb Z}} ) \subseteq \langle I_i \rangle_{i \in {\mathbb Z}}, \ 
\varphi_1( \langle L_i \rangle_{i \in {\mathbb Z}} ) \subseteq \langle I_i \rangle_{i \in {\mathbb Z}}, \ 
\varphi_1( \langle I_i \rangle_{i \in {\mathbb Z}} ) \subseteq \langle L_i \rangle_{i \in {\mathbb Z}}.\]

Let us consider $\varphi_0.$ By Theorem \ref{teo_witt}, it is easy to see that 
$\varphi_0(L_i) = \sum\limits_{k \in {\mathbb Z}} \alpha_k L_{i+k}$ and suppose that
$\varphi_0(I_0) = \sum\limits_{k \in {\mathbb Z}} \beta_k I_{k}.$ 
It follows that for all $m,$ such that $a+mb \neq 0,$ we have 

\begin{longtable}{lllll}
$\varphi_0(I_m)$&$=$&$ -\frac{1}{a+mb}\varphi_0 [L_m, I_0]=
-\frac{1}{2(a+mb)} \left( \sum\limits_{k \in {\mathbb Z}} \alpha_k [L_{m+k}, I_0] + \sum\limits_{k \in {\mathbb Z}} \beta_k [ L_m, I_k] \right),$\\
$\varphi_0(I_m)$&$=$& 
$\sum\limits_{k \in {\mathbb Z}} \left( \frac{\alpha_k (a+bm+bk)+\beta_k(k+a+mb)}{2(a+mb)}\right)I_{k+m}.$
\end{longtable}
Then

\begin{longtable}{l} 
$-(-m+a+mb)\varphi(I_0)= \varphi[L_m, I_{-m}]=
\frac{1}{2} \left( [\varphi(L_m), I_{-m}]+ [L_m, \varphi(I_{-m})] \right)=$\\

$-\frac{1}{2}
\sum\limits_{k \in {\mathbb Z}} \left( \alpha_k (-m+a+(k+m)b) + \frac{(k-m+a+mb)(\alpha_k(a-bm+bk)+\beta_k(k+a-mb))}{2(a-mb)} \right)I_k.$
\end{longtable}
Hence, 

\begin{equation} \label{wabeq}
\beta_k = \alpha_k \frac{ 3 a^2 + a (k + 3 b k - 3 m) +b (k^2 - (2 + b) k m + 3 (1 - b) m^2)}
{ 3 a^2 - k^2 + k m + 3 (1 - b) b m^2 - a (2 k + 3 m)}.
\end{equation} 

Analyzing the quotient from (\ref{wabeq}), we have two cases:

\begin{enumerate}

\item $b\neq -1.$ If $k=0,$ then for any $m$ we have $\alpha_0=\beta_0.$
On the other hand, if $k\neq 0,$ then choosing different numbers $m \in {\mathbb N},$ such that 
$3 a^2 - k^2 + k m + 3 (1 - b) b m^2 - a (2 k + 3 m)\neq 0,$ 
$-m+a+mb \neq 0$ and $-m+a+(k+m)b\neq 0,$
we have that $\beta_k$ gives $0$ (in the case $\alpha_k=0$) or many different values (in the case if $\alpha_k\neq0$).
It follows that all $\alpha_k=\beta_k=0$ for all $k\neq 0$ and $\alpha_0=\beta_0,$
which gives that $\varphi_0$ is a trivial $\frac{1}{2}$-derivation.

Let us now consider the ``odd" part of $\varphi,$ i.e. $\varphi_1.$
It is known that the commutator of a $\frac{1}{2}$-derivation and a derivation gives a new $\frac{1}{2}$-derivation.
Then, $[\varphi_1, {\mathbb L}_{I_m}],$ where ${\mathbb L}_{I_m}$ is the left multiplication on $I_m,$ gives a $\frac{1}{2}$-derivation, such that it keeps invariant of subspaces $\langle L_i \rangle_{i \in {\mathbb Z}}$ and $\langle I_i \rangle_{i \in {\mathbb Z}}$. Thus it is a trivial $\frac{1}{2}$-derivation. Then, 
if $\varphi_1(I_k)=\sum\limits_{t \in {\mathbb Z}} \Gamma^k_t I_{t},$ we have 
\[ \alpha_m I_k = [\varphi_1, {\mathbb L}_{I_m}](I_k)= [I_m, \varphi_1(I_k)]=
-\sum\limits_{t \in {\mathbb Z}} \Gamma^k_t (m+a+tb)I_{m+t}, \]
which gives $\alpha_m=0$ and $\varphi_1(I_k)=0.$
Hence, if $\varphi_1(L_n)=\sum\limits_{t \in {\mathbb Z}} \gamma^n_t I_{n+t},$ then

\begin{longtable}{ll}
$2 n \varphi_1(L_n)=$& $ 2 \varphi_1[L_n,L_0] = [\varphi_1(L_n),L_0] + [L_n, \varphi_1(L_0)]=$ \\
&$\sum\limits_{k \in {\mathbb Z}} \left(\gamma^n_{n+t} (n+t+a) - \gamma_t^0(t+a+nb) \right)I_{n+t}$
\end{longtable}
and 
\begin{longtable}{c} 
$ \gamma_{n+t}^n = \gamma_t^0 \frac{t+a+nb}{t+a-n}.$ 
\end{longtable}
It follows

\begin{longtable}{ll}
$2(m-n) \varphi_1(L_{m+n})=$ &$[\varphi_1(L_m), L_n]+ [L_m, \varphi_1(L_n)] =$\\
& $\sum\limits_{t \in {\mathbb Z}} \left( (m+t+a+nb) \gamma^m_{m+t} - (n+t+a+mb)\gamma^n_{n+t} \right) I_{m+n+t}$\\
\end{longtable}
and 
\begin{longtable}{c} 
$\frac{2(m-n)(t+a +(n+m)b)}{t+a-(n+m)} \gamma^0_t=\left( \frac{(m+t+a+nb)(t+a+mb)}{t+a-m}-\frac{(n+t+a+mb)(t+a+nb)}{t+a-n}\right) \gamma^0_t,$
\end{longtable}
which gives 
\begin{longtable}{c} 
$ \frac{n m (1 + b) (m - n) (a (2 - b) + b (m + n - t) + 2 t)}{(a - m + t) (a - n + t) (a - m - n + t)} \gamma^0_t=0.$
\end{longtable}
It is easy to see that $\gamma^0_t=0,$ $\varphi_1=0$ and $\varphi$ is trivial.

 \item $b=-1.$ Then from (\ref{wabeq}), it is easy to see that $\alpha_k=\beta_k.$
It gives
\begin{equation}\label{eq666}
\varphi_0(L_m)= \sum\limits_{k \in {\mathbb Z}} \alpha_k L_{m+k}\mbox{ and }\varphi_0(I_m)= \sum\limits_{k \in {\mathbb Z}} \alpha_k I_{m+k}.
\end{equation}
It is easy to see, that for every finite set $\{ \alpha_k \}_{k \in {\mathbb Z}}$ of elements from the basic field, the linear map defined as (\ref{eq666})
gives a $\frac{1}{2}$-derivation of $\mathcal{W}(a,-1).$

Let us now consider $\varphi_1$ of $\mathcal{W}(a,-1).$
Obviously, if $\varphi_1(I_0)= \sum\limits_{t \in {\mathbb Z}} \beta_t L_t,$ then if $m \neq a,$ we have
\begin{longtable}{ll}
$\varphi_1(I_m)= \frac{1}{2(m-a)} \varphi_1[L_m, I_0]=
\frac{1}{2(m-a)} \left( [\varphi_1 (L_m),I_0] + [L_m, \varphi_1 (I_0)]\right)=
\sum\limits_{t \in {\mathbb Z}} \frac{\beta_t}{2(m-a)} L_{m+t},
$
\end{longtable}
which gives
\begin{longtable}{ll}
$0= 2 \varphi_1[I_k, I_n] = [ \varphi_1(I_k), I_n]+ [I_k,\varphi_1(I_n)]=
\sum\limits_{t \in {\mathbb Z}} \beta_t \left( \frac{k+t-n-a}{2(k-a)}-\frac{n+t-k-a}{2(n-a)} \right) I_{k+n+t}.$
\end{longtable}
That gives $\varphi_1(I_m)=0$ for $m\neq a.$ 
For the separate case $m=a,$ we consider 
$$\varphi_1(I_a)=\frac{1}{2} \varphi_1 [L_{a+1}, I_{-1}]=\frac{1}{4}([\varphi_1(L_{a+1}), I_{-1}]+[L_{a+1}, \varphi_1(I_{-1})])=0.$$
It follows that $\varphi_1(I_m)=0$ for all $m\in {\mathbb Z}.$
Now, let us consider the action of $\varphi_1$ on $\langle L_i \rangle_{i \in {\mathbb Z}}:$
if $m\neq a,$ then 
\begin{equation}\label{equa2}
\varphi_1 (L_m)= \frac{1}{m-a} \left( [\varphi_1(L_m),I_0] +[L_m,\varphi_1(I_0)] \right)=
 \sum\limits_{t \in {\mathbb Z}} \beta_k I_{m+t}.
\end{equation}
The separate case $m=a$ also gives $\varphi_1(L_a)= \sum\limits_{t \in {\mathbb Z}} \beta_t I_{a+t}.$

It is easy to see that for any finite set $\{ \beta_k \}_{k \in {\mathbb Z}}$ the linear map defined by (\ref{equa2}) gives a $\frac{1}{2}$-derivation of $\mathcal{W}(a,-1).$
Hence, we have the statement of the theorem.
\end{enumerate}

\end{proof}

Hence, by Theorem \ref{princth}, we have the following corollary.

\begin{corollary}
There are no non-trivial transposed Poisson algebra structures defined on a Lie algebra $\mathcal{W}(a,b),$ for $b\neq -1.$
\end{corollary}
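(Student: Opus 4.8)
The plan is to deduce this corollary directly from Theorem \ref{princth} together with the classification of $\frac{1}{2}$-derivations carried out in the preceding theorem. That theorem already establishes that for $b \neq -1$ every $\frac{1}{2}$-derivation of $\mathcal{W}(a,b)$ is trivial, i.e. a scalar multiple of the identity map. This is precisely the hypothesis required by Theorem \ref{princth}, so the conclusion that every transposed Poisson algebra structure on $\mathcal{W}(a,b)$ must be trivial follows immediately.

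To make the mechanism explicit and confirm that no additional case work is needed, I would unwind Theorem \ref{princth} in this setting. Given any transposed Poisson algebra structure $(\mathcal{W}(a,b), \cdot, [\;,\;])$ with $b \neq -1$, Lemma \ref{glavlem} guarantees that each right multiplication $R_z$ in the commutative associative algebra $(\mathcal{W}(a,b), \cdot)$ is a $\frac{1}{2}$-derivation of the underlying Lie algebra. By the preceding theorem each such $R_z$ acts as a scalar $\kappa_z$, so that $x \cdot z = \kappa_z\, x$ for all $x$. Invoking the symmetry of the associative product together with the argument in the proof of Theorem \ref{princth}---choosing two linearly independent elements, which exist since $\dim \mathcal{W}(a,b) > 1$---then forces all the scalars $\kappa_z$ to vanish and yields $x \cdot z = 0$ identically.

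The point to stress is that there is no genuine obstacle in the corollary itself: all of the analytic difficulty lives in the preceding theorem, where the vanishing of non-trivial $\frac{1}{2}$-derivations for $b \neq -1$ is extracted from the rational identity (\ref{wabeq}) by specializing $m$ to infinitely many integers. Once that theorem and Theorem \ref{princth} are in hand, the corollary is a purely formal consequence, and I would present it in one or two lines by simply citing these two results.
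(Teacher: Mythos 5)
Your proposal is correct and matches the paper's own proof exactly: the paper likewise obtains this corollary as an immediate consequence of the preceding theorem (triviality of all $\frac{1}{2}$-derivations of $\mathcal{W}(a,b)$ for $b\neq -1$) combined with Theorem \ref{princth}. Your additional unwinding of the mechanism via Lemma \ref{glavlem} is a faithful restatement of how Theorem \ref{princth} works, not a different route.
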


Similar to the present generalization of the Witt algebra, we can define a generalization of the algebra of Laurent polynomials in the following way: it is a direct sum of the algebra of Laurent polynomials and the regular module over it.

\begin{definition}
The algebra of extended Laurent polynomials $\mathbb L$ is a commutative algebra generated by $\langle L_i, I_j \rangle_{i, j \in {\mathbb Z}}$ and it satisfies
$L_iL_j=L_{i+j}$ and $L_iI_j=I_{i+j}.$ 
\end{definition}

\begin{theorem}
Let $(\mathfrak{L}, \cdot, [\;,\;])$ be a transposed Poisson algebra structure defined on the 
Lie algebra $\mathcal{W}(a,-1).$
Then $(\mathfrak{L}, \cdot, [\;,\;])$ is not Poisson algebra and 
$(\mathfrak{L}, \cdot)$ is a mutation of the algebra of extended Laurent polynomials.
On the other hand, every mutation of the algebra of extended Laurent polynomials gives a transposed Poisson algebra structure with Lie part isomorphic to $\mathcal{W}(a,-1).$
\end{theorem}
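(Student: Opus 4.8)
The plan is to follow the strategy of Theorem~\ref{teowitt}: Lemma~\ref{glavlem} turns every right multiplication of $(\mathfrak{L},\cdot)$ into a $\frac12$-derivation of $\mathcal{W}(a,-1)$, and the classification just established then forces the product into a rigid shape. Concretely, for each basis vector $z$ write $\varphi_z=R_z$ for the associated $\frac12$-derivation, so that $\varphi_z(x)=x\cdot z$. By the preceding theorem there are finitely supported families such that $\varphi_{L_n}(L_m)=\sum_t\alpha^n_tL_{m+t}+\sum_t\beta^n_tI_{m+t}$, $\varphi_{L_n}(I_m)=\sum_t\alpha^n_tI_{m+t}$, $\varphi_{I_n}(L_m)=\sum_t\gamma^n_tL_{m+t}+\sum_t\delta^n_tI_{m+t}$ and $\varphi_{I_n}(I_m)=\sum_t\gamma^n_tI_{m+t}$; these expressions compute, respectively, $L_m\cdot L_n$, $I_m\cdot L_n$, $L_m\cdot I_n$ and $I_m\cdot I_n$.

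Next I would exploit commutativity $\varphi_z(x)=\varphi_x(z)$ to match coefficients. The identity $\varphi_{I_n}(L_m)=\varphi_{L_m}(I_n)$ has only $I$-terms on the right, so $\gamma^n_t=0$ for all $n,t$; in particular $I_m\cdot I_n=\varphi_{I_n}(I_m)=0$. The identity $\varphi_{L_n}(L_m)=\varphi_{L_m}(L_n)$, compared separately on its $L$- and $I$-parts, gives the translation-covariance $\alpha^n_t=a_{t-n}$ and $\beta^n_t=b_{t-n}$ for fixed families $a_k:=\alpha^0_k$, $b_k:=\beta^0_k$, while matching the $I$-parts of the first identity gives $\delta^n_t=a_{t-n}$. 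Collecting everything, $L_m\cdot L_n=\sum_k a_k L_{m+n+k}+\sum_k b_k I_{m+n+k}$, $L_m\cdot I_n=\sum_k a_k I_{m+n+k}$ and $I_m\cdot I_n=0$. Putting $w=\sum_k a_kL_k+\sum_k b_kI_k\in\mathbb{L}$, a direct check shows $x\cdot y=x\,w\,y$ computed in $\mathbb{L}$, that is, $\cdot$ is precisely the mutation of $\mathbb{L}$ by $w$; associativity and commutativity are then automatic.

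Since Definition~\ref{tpa} requires $\cdot\neq0$ we have $w\neq0$. To see the algebra is not Poisson, I would test the Poisson Leibniz rule $[z,x\cdot y]=[z,x]\cdot y+x\cdot[z,y]$ on $z=L_k$, $x=L_p$, $y=L_q$. Expanding both sides with the product above and comparing the $L_{k+p+q+j}$- and $I_{k+p+q+j}$-coefficients yields $a_j(j+k)=0$ and $b_j(j+a+k)=0$ for every $k$, which forces all $a_j=b_j=0$, i.e. $w=0$ — contradicting $w\neq0$. Hence no such transposed Poisson algebra is a Poisson algebra.

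For the converse I would first verify that the triple $(\mathbb{L},\cdot_{\mathbb{L}},[\,,\,])$, with the extended Laurent product and the bracket of $\mathcal{W}(a,-1)$, already satisfies the compatibility $2z\cdot[x,y]=[z\cdot x,y]+[x,z\cdot y]$. By the $x\leftrightarrow y$ symmetry this is a finite case analysis over $z,x,y\in\{L_\bullet,I_\bullet\}$, and it is exactly here that $b=-1$ is used: in the case $z=L_c$, $x=L_p$, $y=I_q$ the two sides differ by a factor $c(b+1)$, so they agree for all $c$ only when $b=-1$. Granting this base case, the mutation Lemma immediately gives that every mutation $(\mathbb{L},\cdot_w,[\,,\,])$ is again a transposed Poisson algebra, and since mutation leaves the bracket unchanged its Lie part is $\mathcal{W}(a,-1)$, completing the proof. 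The main obstacle is the bookkeeping of the second paragraph — keeping the two families $\{a_k\}$, $\{b_k\}$ apart and deducing $I\cdot I=0$ — since once the product is identified as a mutation of $\mathbb{L}$, both the ``not Poisson'' computation and the converse become short and mechanical.
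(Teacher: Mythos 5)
Your proposal is correct and follows essentially the same route as the paper: right multiplications are $\frac{1}{2}$-derivations (Lemma \ref{glavlem}), the classification of $\frac{1}{2}$-derivations of $\mathcal{W}(a,-1)$ together with commutativity forces $I_i\cdot I_j=0$ and translation-covariance of the coefficients, which identifies $\cdot$ as the mutation of $\mathbb{L}$ by $w$, with the converse obtained from the mutation lemma. If anything, your write-up supplies two details the paper leaves implicit — the explicit failure of the Poisson Leibniz rule (forcing $w=0$, hence non-Poisson) and the base-case check that $(\mathbb{L},\cdot,[\;,\;])$ itself satisfies the compatibility condition exactly when $b=-1$.
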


\begin{proof}
Using ideas from the proof of Theorem \ref{teowitt}, it is easy to see that there are two sets
$\{\alpha_t\}_{t \in {\mathbb Z}}$ and $\{\beta_t \}_{t \in {\mathbb Z}},$ such that 
$L_i \cdot L_j= \sum\limits_{t \in {\mathbb Z}} \alpha _t L_{i+j+t} +\sum\limits_{t \in {\mathbb Z}} \beta_t I_{i+j+t}.$
It follows that $L_i \cdot I_j =\sum\limits_{t \in {\mathbb Z}} \alpha _t I_{i+j+t}$
and for every $\frac{1}{2}$-derivation $\phi_m$ of $\mathcal{W}(a,-1)$ corresponding to the right multiplication on $I_m,$ we have that $\phi_m( \langle L_t \rangle_{t \in {\mathbb Z}}) \subseteq \langle I_t \rangle_{t \in {\mathbb Z}},$ which gives $I_i \cdot I_j=0.$
Let us define $\sum\limits_{t \in {\mathbb Z}} \alpha _t L_{t} +\sum\limits_{t \in {\mathbb Z}} \beta_t I_{t}$ as $w.$
Then 
\begin{equation}\label{prowab}
L_i \cdot L_j = L_iwL_j, \ L_i \cdot I_j = L_iwI_j, \ I_i \cdot I_j = I_iwI_j=0, 
\end{equation}
which gives that $(\mathfrak{L}, \cdot)$ is a mutation of the extended algebra of Laurent polynomials.
On the other hand, it is easy to see that for any element $w$ product (\ref{prowab}) gives a transposed Poisson algebra structure defined on the Lie algebra $\mathcal{W}(a,-1)$ for an arbitrary complex number $a.$
It is easy to see that $  \mathfrak{L} \cdot [  \mathfrak{L},\mathfrak{L}]\neq 0$
and, by \cite[Proposition 2.4]{bai20},  $(\mathfrak{L}, \cdot, [\;,\;])$ is non-Poisson.
It gives the complete statement of the theorem. 
\end{proof}

\section{$\frac{1}{2}$-derivations of Virasoro type
algebras and transposed Poisson algebras}
The present section is devoted to the study of
$\frac{1}{2}$-derivations of the Virasoro algebra and some Lie superalgebras related to the Virasoro algebra.
Namely, 
we describe all $\frac{1}{2}$-superderivations of $N=1$ and $N=2$ conformal superlgebras.
As a corollary, we obtain that there are no non-trivial transposed Poisson structures defined  by
the Virasoro algebra, in both $N=1$ and $N=2$ conformal superalgebras.

\subsection{$\frac{1}{2}$-derivations of Virasoro algebra
and transposed Poisson algebras}
The Virasoro algebra is the unique central extension of the Witt algebra, considered in the previous section.

\begin{definition}
The Virasoro algebra $\mathbf{Vir}$ is spanned by generators $\{ L_n \}_{ n \in \mathbb{Z}}$ and the central charge $c$. These generators satisfy 
 $$
 [L_m,L_n] = (m-n)L_{m+n} + \dfrac{m^3 - m}{12}\delta_{m+n,0}c.
 $$
\end{definition}

\begin{theorem}\label{virasoro}
There are no non-trivial $\frac{1}{2}$-derivations of the Virasoro algebra $\mathbf{Vir}.$
\end{theorem}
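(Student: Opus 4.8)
The plan is to exploit the $\mathbb{Z}$-grading of $\mathbf{Vir}$ (with $\deg L_n=n$ and $\deg c=0$) together with the central cocycle $\frac{m^3-m}{12}\delta_{m+n,0}$, which is exactly what distinguishes $\mathbf{Vir}$ from the Witt algebra $\mathfrak{L}$ treated in Theorem \ref{teo_witt}. Since the defining identity $\varphi[x,y]=\frac{1}{2}\left([\varphi(x),y]+[x,\varphi(y)]\right)$ is homogeneous, every $\frac{1}{2}$-derivation decomposes as a sum $\varphi=\sum_{k}\varphi_k$ of graded components, where $\varphi_k$ raises degree by $k$, and each $\varphi_k$ is again a $\frac{1}{2}$-derivation; so I would fix an integer $k$ and analyze a homogeneous $\varphi=\varphi_k$. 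The grading then forces $\varphi(L_m)=\lambda_m L_{m+k}$ for all but one value of $m$, with central corrections allowed only where the target degree is $0$: namely $\varphi(L_{-k})=\lambda_{-k}L_0+\tau c$ and $\varphi(c)=\nu L_k$ when $k\neq0$, or $\varphi(L_0)=\lambda_0 L_0+\mu c$ and $\varphi(c)=\rho c+\sigma L_0$ when $k=0$.

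First I would recover the ``Witt part''. Testing the identity on pairs $(L_m,L_n)$ whose bracket carries no central term (that is, $m+n\neq0$) reproduces exactly the computation of Theorem \ref{teo_witt} and shows that the coefficients $\lambda_m$ are independent of $m$; write $\lambda_m=\lambda$. For $k=0$ the cleanest route is to test $(L_m,L_0)$, which gives $\lambda_m=\lambda_0$ at once.

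The decisive step, and the main obstacle, is to bring the cocycle into play by testing the identity on the pairs $(L_m,L_{-m-k})$ for $k\neq0$ and $(L_m,L_{-m})$ for $k=0$, where the brackets genuinely produce the central term $\frac{m^3-m}{12}c$. Matching the coefficient of $L_{m+k}$ (resp. $L_0$) comes out consistent automatically, but matching the coefficient of $c$ yields an identity that must hold for all but finitely many $m$ and in which a truly cubic polynomial in $m$ (coming from the cocycle applied after $\varphi$, or from the $\rho c$ part of $\varphi(c)$) is forced to equal expressions that are at most linear in $m$ (coming from the $(m-n)$-type structure constants and the $\tau,\mu$ corrections). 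Comparing the $m^3$-coefficients then forces $\lambda=0$ when $k\neq0$ and $\rho=\lambda$ when $k=0$, and comparing the remaining coefficients kills the surviving central corrections $\tau,\nu$ (resp. $\mu,\sigma$).

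Thus for $k\neq0$ the component $\varphi_k$ vanishes identically, whereas for $k=0$ one is left with $\varphi(L_m)=\lambda L_m$ and $\varphi(c)=\lambda c$, i.e. a trivial $\frac{1}{2}$-derivation. Summing over $k$ shows $\varphi$ itself is trivial. The only real subtlety I anticipate is the careful bookkeeping of the finitely many exceptional indices $m$ (where $\varphi$ lands on $L_0$ or on $c$) and the correct signs of the cocycle under antisymmetry, $\frac{(-m)^3-(-m)}{12}=-\frac{m^3-m}{12}$; once these are handled, the cubic-versus-linear mismatch forced by the central extension does all the work, and it is precisely this mismatch that prevents the nontrivial $\frac{1}{2}$-derivations of the Witt algebra from lifting to $\mathbf{Vir}$.
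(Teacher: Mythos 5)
Your proposal is correct and is essentially the paper's own argument: the paper likewise first derives the Witt-type constraint $\alpha_{p,k}=\alpha_{0,k-p}$ by testing the identity against $L_0$ (your step recovering constancy of the $\lambda_m$), and then matches the coefficient of $c$ on pairs $(L_p,L_q)$, where the cubic cocycle set against the at-most-linear remaining terms forces all coefficients $\alpha_{0,t}$ and all central corrections $\rho_t$ to vanish — exactly your cubic-versus-linear mismatch. The only difference is bookkeeping: you split $\varphi$ into graded components and argue one degree shift $k$ at a time, carrying the central corrections $\tau,\mu,\nu,\sigma$ through the computation, while the paper treats all shifts simultaneously via the coefficient array $\alpha_{n,i},\rho_n$ and normalizes $\varphi(c)=0$ at the outset by subtracting a multiple of the identity.
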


\begin{Proof}
Let $\varphi$ be a $\frac{1}{2}$-derivation of $\mathbf{Vir}$
 and $\varphi(L_n) = \sum\limits_{i \in \mathbb{Z}} \alpha_{n,i} L_i + \rho_n c.$ We then get
\begin{longtable}{lll}
 $[\varphi(L_p),L_q]$&$ =$&$ \sum\limits_{i \in \mathbb{Z}} (i-q) \alpha_{p,i}L_{i+q} + \dfrac{i^3-i}{12}\delta_{i+q,0}\alpha_{p,i} c$ \\
 $[L_p,\varphi(L_q)]$&$ =$&$ \sum\limits_{j \in \mathbb{Z}} (p-j) \alpha_{q,j}L_{p+j} + \dfrac{p^3-p}{12}\delta_{p+j,0}\alpha_{q,j}c$
\end{longtable}
and
\begin{longtable}{lll}
 $\varphi[L_p,L_q]$&$ = $&$(p-q)\sum\limits_{k \in \mathbb{Z}}\alpha_{p+q,k} L_{k} +(p-q)\rho_{p+q}c + \dfrac{p^3 - p}{12} \delta_{p+q,0}\varphi(c).$ 
\end{longtable}
  It is easy to see that \begin{center}
      $0=2\varphi[L_p,c]=[\varphi(L_p),c]+[L_p,\varphi(c)]=[L_p,\varphi(c)].$
  \end{center}
  Hence,  $\varphi(c) = \rho_c c.$ Since the identity map is obviously a $\frac{1}{2}$-derivation we then can put $\varphi(c)=0$, i.e., $\rho_c = 0.$

Now for every numbers $i,j$ we have 
\[
 [\varphi(L_p),L_q] = \sum_{i\in \mathbb{Z}} (i - 2q) \alpha_{p,i-q}L_i + \dfrac{q-q^3}{12} \alpha_{p,-q} c,
\]
and
\[
 [L_p,\varphi(L_q)] = \sum_{j \in \mathbb{Z}} (2p - j) \alpha_{q,j-p}L_j + \dfrac{p^3 - p}{12}\alpha_{q,-p}c.
\]

Adding up similar terms we obtain
\[ 
[\varphi(L_p),L_q] + [L_p,\varphi(L_q)] = \sum_{k\in \mathbb{Z}} ((k-2q)\alpha_{p,k-q} + (2p-k)\alpha_{q,k-p})L_k 
 + \dfrac{c}{12} \left( (q-q^3) \alpha_{p,-q} + (p^3 - p)\alpha_{q,-p}\right).\]

Thus for any $k\in \mathbb{Z}$ we get
\begin{equation}\label{eq1}
 (p-q)\alpha_{p+q,k} = \dfrac{1}{2} \left( (k-2q)\alpha_{p,k-q} + (2p-k)\alpha_{q,k-p}\right) 
\end{equation}
and
\begin{equation}\label{eq2}
 (p-q)\rho_{p+q} = \dfrac{1}{24}( (q-q^3) \alpha_{p,-q} + (p^3 - p)\alpha_{q,-p}) 
\end{equation}
 
Set $q=0$, by (\ref{eq1}),
\[
 p\alpha_{p,k} = \dfrac{1}{2}( k\alpha_{p,k} + (2p-k)\alpha_{0,k-p}) 
\]
hence
\begin{equation}\label{eq3}
 \alpha_{p,k} = \alpha_{0,k-p}. 
\end{equation}

By (\ref{eq2},\ref{eq3}), for $p\neq q$,
\[
 (p-q)\rho_{p+q} = \dfrac{1}{24}( (q-q^3) \alpha_{0,-q-p} + (p^3 - p)\alpha_{0,-p-q})= 
 \dfrac{p-q}{24}( p^2+pq+q^2-1)\alpha_{0,-p-q}
\]
and
\[
\rho_{p+q} = \dfrac{1}{24}( p^2+pq+q^2-1)\alpha_{0,-p-q},\]
which gives that $\rho_{p}=0$ and $\alpha_{0,p}=0$ for all numbers $p.$
Hence, $\varphi=0$
and every $\frac{1}{2}$-derivation of $\mathbf{Vir}$ can be expressed as multiplication on some fixed element from the basic field. 
 \end{Proof}

Hence, by Theorem \ref{princth}, we have the following corollary. 

\begin{corollary}

There are no non-trivial transposed Poisson algebra structures defined on the Virasoro algebra $\mathbf{Vir}$.
\end{corollary}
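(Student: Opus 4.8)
The plan is to obtain this statement as an immediate consequence of the two preceding results, so that no new computation is required. By Theorem \ref{virasoro}, the Virasoro algebra $\mathbf{Vir}$ admits no non-trivial $\frac{1}{2}$-derivations: every $\frac{1}{2}$-derivation of $\mathbf{Vir}$ is a scalar multiple of the identity map. This is exactly the hypothesis demanded by Theorem \ref{princth}.

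First I would apply Theorem \ref{princth} with ${\mathfrak L} = \mathbf{Vir}$. Since $\mathbf{Vir}$ has no non-trivial $\frac{1}{2}$-derivations, that theorem asserts directly that every transposed Poisson algebra structure defined on $\mathbf{Vir}$ is trivial, i.e. the commutative associative product $\cdot$ vanishes identically. This already yields the corollary.

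To keep the dependence transparent, I would recall the mechanism behind Theorem \ref{princth}. For a transposed Poisson algebra $(\mathbf{Vir}, \cdot, [\;,\;])$, Lemma \ref{glavlem} shows that each right multiplication $R_z$ is a $\frac{1}{2}$-derivation of $(\mathbf{Vir}, [\;,\;])$, and one has $x \cdot y = \varphi_x(y) = \varphi_y(x)$. Triviality of all $\frac{1}{2}$-derivations forces $\varphi_x(z) = \kappa_x z$ for some scalars $\kappa_x$; choosing linearly independent $x, y$ (possible since $\dim \mathbf{Vir} > 1$) and comparing $\kappa_x y = \kappa_y x$ gives $\kappa_x = \kappa_y = 0$, whence $x \cdot y = 0$ for all $x, y$.

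The only substantive content therefore lies in Theorem \ref{virasoro}, whose proof analyses the structure constants $[L_m, L_n] = (m-n)L_{m+n} + \frac{m^3-m}{12}\delta_{m+n,0}c$ to force every $\frac{1}{2}$-derivation to be scalar; that is the genuine computational obstacle. Once it is in hand, the corollary is a purely formal deduction, and I expect no additional difficulty.
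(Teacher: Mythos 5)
Your proposal is correct and follows exactly the paper's route: the corollary is deduced immediately from Theorem \ref{virasoro} (triviality of all $\frac{1}{2}$-derivations of $\mathbf{Vir}$) via Theorem \ref{princth}, which is precisely what the paper does. Your additional unwinding of the mechanism behind Theorem \ref{princth} (via Lemma \ref{glavlem} and the scalars $\kappa_x$, $\kappa_y$) is accurate but not needed, since the theorem applies as a black box.
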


\subsection{$\frac{1}{2}$-derivations of super Virasoro algebra
and transposed Poisson superalgebras}
In mathematical physics, a super Virasoro algebra is an extension of the Virasoro algebra to a Lie superalgebra. There are two extensions with particular importance in superstring theory: the Ramond algebra (named after Pierre Ramond) and the Neveu–Schwarz algebra (named after André Neveu and John Henry Schwarz). Both algebras have $N = 1$ supersymmetry and an even part given by the Virasoro algebra. They describe the symmetries of a superstring in two different sectors, called the Ramond sector and the Neveu–Schwarz sector.

\begin{definition}
Let $\mathbf{sVir}$ be a Lie superalgebra generated by 
even elements $\{ L_m,\ c \}_{m\in \mathbb{Z}}$ and 
odd elements $\{ G_{r} \}_{ r \in i+\mathbb{Z}} $, 
where $i=0$ (the Ramond case), or $i=\frac{1}{2}$ (the Neveu–Schwarz case). 
In both cases, $c$ is central in the superalgebra, and the additional graded brackets are given by
\[ [L_{m},G_{r}]=\left({\frac {m}{2}}-r\right)G_{m+r} \ \mbox{ and }
\ [G_{r},G_{s}]=2L_{r+s}+{\frac {c}{3}}\left(r^{2}-{\frac {1}{4}}\right)\delta _{r+s,0}. \]

\end{definition}

\begin{lemma}\label{supervir}
There are no non-trivial $\frac{1}{2}$-derivations of the super Virasoro algebra $\mathbf{sVir}.$
\end{lemma}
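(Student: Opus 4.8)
The plan is to follow the same strategy that worked for the Virasoro algebra in Theorem \ref{virasoro}, now in the $\mathbb{Z}_2$-graded setting. Since $\mathbf{sVir} = \mathbf{sVir}_0 \oplus \mathbf{sVir}_1$, where the even part $\mathbf{sVir}_0 = \langle L_m, c\rangle$ is essentially the Virasoro algebra and the odd part $\mathbf{sVir}_1 = \langle G_r \rangle$ carries the tensor-density-like action, I would decompose an arbitrary $\frac{1}{2}$-derivation $\varphi$ as $\varphi = \varphi_0 + \varphi_1$ according to parity, so that $\varphi_0$ preserves each graded component and $\varphi_1$ swaps them. First I would analyze $\varphi_0$: its restriction to the even part is a $\frac{1}{2}$-derivation of $\mathbf{Vir}$, so by Theorem \ref{virasoro} it must be trivial on $\langle L_m, c\rangle$, i.e. $\varphi_0(L_m) = \lambda L_m$ and $\varphi_0(c) = 0$ (after subtracting a multiple of the identity we may take $\lambda = 0$). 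It then remains to pin down $\varphi_0$ on the odd part. Writing $\varphi_0(G_r) = \sum_s \gamma_{r,s} G_s$ and applying the $\frac{1}{2}$-derivation identity to $[L_m, G_r] = (\tfrac{m}{2} - r)G_{m+r}$, together with the already-established triviality of $\varphi_0$ on the $L_m$, should force a recursion on the coefficients $\gamma_{r,s}$ of exactly the same shape as equation (\ref{eq3}); I expect the conclusion $\varphi_0(G_r) = 0$ for all $r$ once $\lambda = 0$.

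Next I would treat the odd part $\varphi_1$, which sends $L_m \mapsto \sum_r \mu_{m,r} G_r$ and $G_r \mapsto \sum_m \nu_{r,m} L_m + \rho_r c$. The key structural tool, exactly as used in the proof for $\mathcal{W}(a,b)$, is that the (super)commutator of a $\frac{1}{2}$-derivation with an inner derivation (a left multiplication) is again a $\frac{1}{2}$-derivation. Taking the bracket of $\varphi_1$ with the left multiplication by a suitable $G_s$ produces a parity-preserving $\frac{1}{2}$-derivation, which by the first part of the argument is trivial; reading off its action on the generators yields linear constraints on the $\mu$'s and $\nu$'s. Combined with applying the $\frac{1}{2}$-derivation identity directly to the mixed bracket $[L_m, G_r] = (\tfrac{m}{2}-r)G_{m+r}$ and to $[G_r, G_s] = 2L_{r+s} + \tfrac{c}{3}(r^2 - \tfrac14)\delta_{r+s,0}$, these should over-determine the coefficients and force $\varphi_1 = 0$. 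Particular care is needed with the central term: evaluating the identity on brackets with $r+s = 0$ gives separate equations controlling $\rho_r$ and the central output, which must be shown to vanish.

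The main obstacle I anticipate is the bookkeeping around the central charge $c$ and the two sectors simultaneously. In the Ramond case ($i = 0$) the half-integer shift is absent and $G_0$ plays a distinguished role, so the degenerate brackets (those with $r + s = 0$, where the central term appears) need to be handled as separate cases rather than folded into a single generic recursion; in the Neveu–Schwarz case ($i = \frac12$) one never has $r \in \mathbb{Z}$, which changes which coefficients can be nonzero. The cleanest route is to first kill $\varphi$ on the whole even part using Theorem \ref{virasoro}, then use the commutator-with-inner-derivation trick to reduce the odd part to a parity-preserving derivation that we have already shown to be trivial, thereby avoiding a direct and delicate solution of the full coupled recursion. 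Finally I would note that, because the analysis of both the $\mu,\nu$ constraints and the central equations is uniform modulo the location of the index set, the argument applies verbatim to both the Ramond and Neveu–Schwarz cases, giving $\varphi = 0$ up to a multiple of the identity and hence the triviality asserted in the lemma.
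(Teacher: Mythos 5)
Your proposal follows essentially the same route as the paper's proof: decompose $\varphi$ by parity, kill the even part by restricting to the Virasoro subalgebra (Theorem \ref{virasoro}) and then propagating to the odd generators via the brackets $[L_m,G_r]$, and handle the odd part by supercommutating with inner odd derivations $\phi_{G_s}$ to produce parity-preserving $\frac{1}{2}$-derivations that are already known to be trivial. The only differences are cosmetic (the paper kills $\varphi_0$ on the odd generators using the specific brackets with $G_0$, and concludes $\varphi_1=0$ via $[\mathbf{sVir}_0,\mathbf{sVir}_1]=\mathbf{sVir}_1$ rather than an explicit coefficient analysis), so your plan is correct and matches the paper's argument.
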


\begin{Proof}
Let $\varphi$ be a $\frac{1}{2}$-superderivation of $\mathbf{sVir}.$
Then the even part $\varphi_0$ and the odd part $\varphi_1$ of $\varphi$ are, respectively, 
an even $\frac{1}{2}$-derivation and an odd $\frac{1}{2}$-derivation of $\mathbf{sVir}.$
Now, the restriction $\varphi_0|_{Vir}$ is a $\frac{1}{2}$-derivation of $\mathbf{Vir}$ and it is trivial. Hence, we can suppose that $\varphi_0|_{Vir}=0.$
Note that
\[m \varphi(G_m)= [L_m, \varphi(G_0)] \ \mbox{ and }
0=\varphi[G_0,G_0]= [G_0, \varphi(G_0)],\]
which gives $\varphi_0=0.$

It is known
the supercommutator of a $\frac{1}{2}$-superderivation and one superderivation gives a new $\frac{1}{2}$-superderivation.
Now, let $\phi_x$ be an inner odd derivation of $\mathbf{sVir},$ then 
$[\varphi_1, \phi_x]$ is an even $\frac{1}{2}$-derivation (left multiplication of $x \in \mathbf{sVir}_1)$ of $\mathbf{sVir},$ which is trivial.
Then, if $\varphi_1(c) =\sum\limits_{k\in {\mathbb Z}} \gamma_k G_k$ we have 
\[
\alpha c= [\varphi_1, \phi_{G_i}](c)= \varphi_1[ G_i,c]+[G_i, \varphi_1(c)]=
2 \sum\limits_{k\in {\mathbb Z}} \gamma_k L_{i+k} + \frac{c}{3}\left(i^2-\frac{1}{4}\right) \gamma_{-i},
\]
which gives that $\alpha=0.$
Hence,  
\begin{center}
$0=2\left(\varphi_1[G_i,L_k]+[G_i,\varphi_1(L_k)]\right)= [\varphi_1(G_i),L_k]+[G_i,\varphi_1(L_k)],$
$2\varphi_1[L_k,G_i]= [\varphi_1(L_k),G_i]+[L_k,\varphi_1(G_i)].$ 
\end{center}
 Hence
\[\varphi_1[L_k,G_i]=[\varphi_1(L_k),G_i]=[L_k,\varphi_1(G_i)].\]

Let $\varphi(G_i)=\sum\limits_{t\in {\mathbb Z}} \Gamma^i_t L_t+\Gamma^ic,$ then
\begin{center}
    $-\sum\limits_{t\in {\mathbb Z}} t \Gamma^i_tL_t=[L_0, \varphi_1(G_i)]=\varphi_1[L_0, G_i]=-i\varphi_1(G_i)=-i\sum\limits_{t\in {\mathbb Z}} \Gamma^i_tL_t-i \Gamma^i c, $
\end{center}
which gives that $\varphi_1(G_i)=\Gamma^i_i L_i,$ for $i\neq 0.$
On the other hand we have 
\begin{center}
    $\sum\limits_{t\in {\mathbb Z}}(i-t)\Gamma^0_tL_{i+t}+\frac{\Gamma^0_{-i}(i^3-i)c}{12}=[L_i, \varphi_1(G_0)]=\varphi_1[L_i,G_0]=\frac{i}{2}\varphi_1(G_i)=\frac{i}{2}\Gamma^i_i L_i,$
\end{center}
which gives   $\Gamma_i^i=2\Gamma_0^0,$ for $i\neq 0.$
Doing a similar calculation we have
\begin{center}
    $(k-i)\Gamma^i_i L_{k+i}+  \frac{\Gamma^i_{-k}(k^3-k)c}{12}=[L_k,\varphi_1(G_i)]=\varphi_1[L_k, G_i]=
    \Big(\frac{k}{2}-i\Big)\varphi_1(G_{k+i})=\Big(\frac{k}{2}-i\Big)\Gamma^{k+i}_{k+i}L_{k+i},$
\end{center}
which gives   $\varphi_1(G_i)=0,$ for $i\neq0.$
At the end, we note that 
\begin{center}
$\varphi_1(G_0)= \frac{1}{3}
\varphi_1[[G_2, G_{-1}], G_{-1}]=
\frac{1}{12}[[\varphi_1(G_2), G_{-1}], G_{-1}]-
\frac{1}{12}[[G_2, \varphi_1(G_{-1})], G_{-1}]+
\frac{1}{6}[[G_2, G_{-1}], \varphi_1(G_{-1})]=0,$
\end{center}
which gives 
$\varphi_1(\mathbf{sVir}_1)=0$ and it follows that 
$\varphi_1(\mathbf{sVir}_0)=\varphi_1[\mathbf{sVir}_1, \mathbf{sVir}_1]=0.$
Hence, it is easy to see that $\varphi_1=0$ and $\varphi$ is trivial.
\end{Proof}

Hence, by Lemma \ref{superteo}, we have the following corollary. 

\begin{corollary}

There are no non-trivial transposed Poisson superalgebra structures defined on a super Virasoro algebra.
\end{corollary}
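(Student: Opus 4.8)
The plan is to obtain this corollary as a direct consequence of Lemma \ref{supervir} combined with the super-analog of Theorem \ref{princth} recorded in Lemma \ref{superteo}, so that no new computation with the brackets of $\mathbf{sVir}$ is required. First I would observe that Lemma \ref{supervir} has already carried out the substantive work: it shows that in both the Ramond case ($i=0$) and the Neveu–Schwarz case ($i=\frac{1}{2}$) every $\frac{1}{2}$-superderivation of $\mathbf{sVir}$ is trivial, i.e. reduces to a scalar multiple of the identity on each homogeneous component. Thus $\mathbf{sVir}$ belongs to the class of Lie superalgebras to which Lemma \ref{superteo} applies.

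Next I would invoke Lemma \ref{superteo}, which states that a Lie superalgebra with no non-trivial $\frac{1}{2}$-superderivations admits only trivial transposed Poisson superalgebra structures. Unwinding its proof in the present setting: given any transposed Poisson superalgebra $(\mathbf{sVir},\cdot,[\;,\;])$, the right multiplications $R_z\colon x \mapsto x\cdot z$ are $\frac{1}{2}$-superderivations of the Lie part, by the super-analog of Lemma \ref{glavlem}. By Lemma \ref{supervir} each such $R_z$ then acts as a scalar on homogeneous elements, and the dimension argument behind Theorem \ref{princth}, run in the $\mathbb{Z}_2$-graded setting, forces the associative product to vanish whenever the underlying space has dimension greater than one. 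Since $\mathbf{sVir}$ is infinite-dimensional this applies, so $\cdot$ is identically zero and the structure is trivial.

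The two steps together yield the corollary for both sectors at once, the single case split having been absorbed into Lemma \ref{supervir}. I do not expect any genuine obstacle at this stage, since all of the analytic content sits in that lemma and the passage from the triviality of $\frac{1}{2}$-superderivations to the triviality of transposed Poisson superalgebra structures is purely formal; the only point to watch is the routine verification that the scalars produced by Lemma \ref{superteo} are forced to zero on a space of dimension exceeding one, which is automatic for $\mathbf{sVir}$.
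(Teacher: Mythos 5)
Your proposal is correct and matches the paper's own argument: the corollary is obtained exactly by combining Lemma \ref{supervir} (triviality of all $\frac{1}{2}$-superderivations of $\mathbf{sVir}$) with Lemma \ref{superteo}, the super-analog of Theorem \ref{princth}. Your unwinding of the right-multiplication and dimension argument is a faithful expansion of what the paper leaves implicit, so there is nothing to add.
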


\subsection{$\frac{1}{2}$-derivations of $N = 2$ superconformal algebra
and transposed Poisson superalgebras}
In mathematical physics, the $2D$ $N = 2$ superconformal algebra is an infinite-dimensional Lie superalgebra, related to supersymmetry, that occurs in string theory and two-dimensional conformal field theory. It has important applications in mirror symmetry. It was introduced by Ademollo, Brink, and D'Adda et al. in 1976 as a gauge algebra of the $U(1)$ fermionic string. 
There are two slightly different ways to describe the $N = 2$ superconformal algebra, called the $N = 2$ Ramond algebra and the $N = 2$ Neveu–Schwarz algebra, which are isomorphic (see below) but differ in the choice of standard basis. 
 
\begin{definition}
The $N = 2$ superconformal algebra is the Lie superalgebra with basis of even elements $\{c, L_n, J_n\}_{n \in {\mathbb Z}},$ and odd elements $\{ G^+_r,$ $G^-_r\}_{r \in i+{\mathbb Z}},$ $i=0$ (the Ramond basis), or $i=\frac{1}{2}$ (the Neveu–Schwarz basis). 
Additional graded brackets are given by
\begin{longtable}{ll}
$[L_{m},L_{n}]=(m-n)L_{{m+n}}+{c \over 12}(m^{3}-m)\delta _{{m+n,0}}$& 
$[L_{m},\,J_{n}]=-nJ_{m+n}$\\
$[J_{m},J_{n}]={c \over 3}m\delta _{m+n,0}$ &
$ [J_{m},G_{r}^{\pm }]=\pm G_{m+r}^{\pm } $\\

$[G_{r}^{+},G_{s}^{-}]=L_{r+s}+{1 \over 2}(r-s)J_{r+s}+{c \over 6}(r^{2}-{1 \over 4})\delta _{r+s,0}$ &
$ [L_{m},G_{r}^{\pm }]=({m \over 2}-r)G_{r+m}^{\pm } $
\end{longtable}
\end{definition} 

\begin{lemma}
There are no non-trivial $\frac{1}{2}$-derivations of $N = 2$ superconformal algebra.
\end{lemma}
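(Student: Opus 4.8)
The plan is to follow the strategy of Lemma \ref{supervir}: decompose a $\frac{1}{2}$-superderivation $\varphi$ of the $N=2$ superconformal algebra $\mathcal{A}$ into its even and odd parts $\varphi=\varphi_0+\varphi_1$, both of which are again $\frac{1}{2}$-superderivations, and prove that each is trivial. Since $\mathbf{Vir}$ sits inside the even part $\mathcal{A}_{\bar 0}=\langle L_n,J_n,c\rangle$ while the odd part $\mathcal{A}_{\bar 1}=\langle G^+_r,G^-_r\rangle$ carries a $U(1)$-charge, I expect the even computation to be the substantial one and the odd one to follow by a charge argument driven by the current $J_0$.

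First I would treat $\varphi_0$. Applying $\varphi_0$ to $[c,x]=0$ shows $\varphi_0(c)$ is central, and since the centre of $\mathcal{A}$ is exactly $\mathbb{C}c$ we get $\varphi_0(c)=\rho_c c$; subtracting the trivial $\frac{1}{2}$-derivation $\rho_c\cdot\mathrm{id}$ I may assume $\varphi_0(c)=0$. The key structural observation is that $[L_m,J_n]=-nJ_{m+n}$ realizes $\langle J_n\rangle$ as the tensor density module $I(0,0)$, so modulo the centre $\mathcal{A}_{\bar 0}/\mathbb{C}c\cong\mathcal{W}(0,0)$. As $\varphi_0(c)=0$, the map $\varphi_0$ descends to a $\frac{1}{2}$-derivation $\bar\varphi_0$ of $\mathcal{W}(0,0)$, and since $0\neq -1$ the description of $\frac{1}{2}$-derivations of $\mathcal{W}(a,b)$ yields $\bar\varphi_0=\kappa\,\mathrm{id}$. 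Lifting back gives $\varphi_0(L_m)=\kappa L_m+\rho_m c$ and $\varphi_0(J_m)=\kappa J_m+\sigma_m c$. Feeding this into the defining identity on $[J_m,J_{-m}]=\frac{c}{3}m\,\delta_{m-m,0}$ produces $0=\kappa\,\frac{m}{3}\,c$, so $\kappa=0$ at once; the remaining brackets $[L_m,J_n]$ and $[L_m,L_n]$ then force $\sigma_m=0$ and $\rho_m=0$, whence $\varphi_0=0$.

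For $\varphi_1$, which interchanges $\mathcal{A}_{\bar 0}$ and $\mathcal{A}_{\bar 1}$, I would first note that the odd centre of $\mathcal{A}$ is zero: for $z=\sum(g^+_kG^+_k+g^-_kG^-_k)$ one has $[J_0,z]=\sum g^+_kG^+_k-\sum g^-_kG^-_k$, which vanishes only if $z=0$. Since $\varphi_1(c)$ is central by the argument above, this gives $\varphi_1(c)=0$. Next, following Lemma \ref{supervir}, the supercommutator $D^{\pm}_r=[\varphi_1,\mathrm{ad}\,G^\pm_r]$ is an even $\frac{1}{2}$-derivation, hence trivial by the previous step; evaluating on $c$ gives $D^{\pm}_r(c)=[G^\pm_r,\varphi_1(c)]=0$, so the scalar is $0$ and $D^{\pm}_r=0$. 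Combining $D^{\pm}_r=0$ with the $\frac{1}{2}$-superderivation identity yields $[\varphi_1(G^\pm_r),y]=-[G^\pm_r,\varphi_1(y)]$ for all $y$; taking $y=J_0$ and using that $J_0$ is central in $\mathcal{A}_{\bar 0}$ forces $[G^\pm_r,\varphi_1(J_0)]=0$ for all $r$, hence $\varphi_1(J_0)=0$ by the charge argument. Then from $[J_0,G^\pm_r]=\pm G^\pm_r$ and $\varphi_1(J_0)=0$ one obtains $\pm\varphi_1(G^\pm_r)=\frac{1}{2}[J_0,\varphi_1(G^\pm_r)]=0$, since $\varphi_1(G^\pm_r)\in\mathcal{A}_{\bar 0}$ is killed by $\mathrm{ad}\,J_0$. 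Thus $\varphi_1$ vanishes on $\mathcal{A}_{\bar 1}$, and as $\mathcal{A}_{\bar 0}=[\mathcal{A}_{\bar 1},\mathcal{A}_{\bar 1}]$ the identity gives $\varphi_1(\mathcal{A}_{\bar 0})=0$ as well, so $\varphi$ is trivial.

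The step I expect to be most delicate is the even part: one must make sure the description of $\frac{1}{2}$-derivations of $\mathcal{W}(a,b)$ is genuinely usable at the degenerate value $(a,b)=(0,0)$, where the generic normalization dividing by $a+mb$ breaks down. It may therefore be safer to establish triviality of $\bar\varphi_0$ by a direct bracket computation patterned on Theorem \ref{virasoro}, splitting $\varphi_0$ into its $L\to L,\ J\to J$ and $L\to J,\ J\to L$ components and using $[L_0,J_n]=-nJ_n$ together with a general $[L_m,J_n]$ to rigidify the coefficients. The odd part, by contrast, is light once the charge operator $J_0$ is available; the only care needed there is tracking the Koszul signs and keeping the charge $+1$ and charge $-1$ sectors $G^+$ and $G^-$ separate.
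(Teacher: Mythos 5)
Your proposal is correct in substance, but it takes a genuinely different route from the paper's (admittedly sketchy) proof. The paper handles the even part by viewing $\mathcal{N}_0$ as a $\mathbb{Z}_2$-graded Lie algebra $\langle L_n,c\rangle\oplus\langle J_n\rangle$, invoking Theorem \ref{virasoro} for the $\langle L_n,c\rangle$-block and disposing of the $J$-blocks by ``easy calculations'' modelled on Lemma \ref{supervir}; the odd part is then dismissed in one line via the same commutator-with-inner-odd-derivations trick you use. You instead quotient by the centre, identify $\mathcal{A}_{\bar 0}/\mathbb{C}c\cong\mathcal{W}(0,0)$, import the $\mathcal{W}(a,b)$ theorem at $b=0\neq-1$, and then kill the scalar $\kappa$ via $[J_m,J_{-m}]=\tfrac{c}{3}m$ and the central coefficients $\rho_m,\sigma_m$ via $[L_m,J_n]$ and $[L_m,L_n]$. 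This buys a shorter even-part argument that reuses existing machinery, and your odd-part finish (triviality of $[\varphi_1,\mathrm{ad}\,G^\pm_r]$, then $\varphi_1(J_0)=0$, then $\varphi_1(G^\pm_r)=0$ via the charge operator $J_0$, then $\varphi_1(\mathcal{A}_{\bar 0})=0$ from $\mathcal{A}_{\bar 0}=[\mathcal{A}_{\bar 1},\mathcal{A}_{\bar 1}]$) is considerably more explicit than the paper's.

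Two repairs are needed, both of which you essentially anticipate. First, your worry about $(a,b)=(0,0)$ is justified: the paper's proof of the $\mathcal{W}(a,b)$ theorem normalizes by $-1/(a+mb)$, and at $(0,0)$ one has $a+mb=0$ for \emph{every} $m$, so that proof gives no information there. The statement is still true at $(0,0)$, but to make your argument rigorous you do need the fallback direct computation you describe (e.g. $[L_0,J_n]=-nJ_n$ pins $\varphi_0(J_n)$ up to the coefficient of $J_{2n}$, and the brackets $[L_m,J_n]$ with $m\neq 0$, in particular $[L_m,J_{-m}]=mJ_0$, eliminate the remaining freedom); this computation does go through. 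Second, your even-part argument as written only shows $\varphi_0$ vanishes on $\mathcal{A}_{\bar 0}$: since $\varphi_0$ is even it also maps $\mathcal{A}_{\bar 1}$ to $\mathcal{A}_{\bar 1}$, and one more line is required, e.g. from $\varphi_0(J_0)=0$ one gets $\pm\varphi_0(G^\pm_r)=\varphi_0[J_0,G^\pm_r]=\tfrac{1}{2}[J_0,\varphi_0(G^\pm_r)]$, and comparing $J_0$-charges of the components of $\varphi_0(G^\pm_r)$ forces $\varphi_0(G^\pm_r)=0$. (The paper's sketch glosses over the same point.) With these two additions your argument is complete.
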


\begin{Proof}
The main idea of the proof is similar to the proof of Lemma \ref{supervir} and we will give only a brief sketch to prove it.
Let $\mathcal{N}=\mathcal{N}_0+\mathcal{N}_1$ be a $N = 2$ superconformal algebra and $\varphi=\varphi_0+\varphi_1$ be a $\frac{1}{2}$-superderivation of $\mathcal{N}.$
Now, 
$\mathcal{N}_0=(\mathcal{N}_0)_0+(\mathcal{N}_0)_1$ is a $\mathbb{Z}_2$-graded algebra, where
$(\mathcal{N}_0)_0$ is generated by $\{ L_n, c \}_{n \in {\mathbb Z}}.$ It is isomorphic to the Virasoro algebra, $(\mathcal{N}_0)_1$ is generated by $\{ J_n\};$
and $(\varphi_0)|_{\mathcal{N}_0}=
((\varphi_0)|_{\mathcal{N}_0})_0+((\varphi_0)|_{\mathcal{N}_0})_1
$ is a $\frac{1}{2}$-derivation of $\mathcal{N}_0.$
By Theorem \ref{virasoro},
$((\varphi_0)|_{\mathcal{N}_0})_0$ is a trivial $\frac{1}{2}$-derivation of $(\mathcal{N}_0)_0$
and we can consider $((\varphi_0)|_{\mathcal{N}_0})_0=0.$
By some easy calculations similar with the proof of Lemma \ref{supervir}, 
we have $(\varphi_0)|_{\mathcal{N}_0}=0$ and it is a trivial $\frac{1}{2}$-derivation of $\mathcal{N}_0.$
After that, we have that every even $\frac{1}{2}$-superderivaton $\varphi_0$ of $\mathcal{N}_0$ is trivial.
Now, let $\phi_x$ be an inner odd derivation (left multiplication on $x \in \mathcal{N}_1$) of $\mathcal{N},$ then 
$[\varphi_1, \phi_x]$ is an even $\frac{1}{2}$-derivation of $\mathcal{N},$ which is trivial.
Hence, it is easy to see that $\varphi_1=0$ and $\varphi$ is trivial.
\end{Proof}

Hence, by Lemma \ref{superteo}, we have the following corollary. 

\begin{corollary}
There are no non-trivial transposed Poisson superalgebra structures defined on a $N = 2$ superconformal algebra.
\end{corollary}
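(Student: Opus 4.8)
The plan is to deduce this statement formally from the immediately preceding Lemma together with Lemma \ref{superteo}, exactly as the Virasoro and super-Virasoro corollaries were obtained. First I would recall the super-analog of Lemma \ref{glavlem}: if $(\mathcal{N}, \cdot, [\;,\;])$ is any transposed Poisson superalgebra structure carried by the $N=2$ superconformal algebra $\mathcal{N}$, then for each homogeneous $z \in \mathcal{N}$ the right multiplication $R_z$ in the supercommutative associative part is a $\frac{1}{2}$-superderivation of the Lie part $(\mathcal{N}, [\;,\;])$ of parity $|z|$. This is the bridge that converts information about $\frac{1}{2}$-superderivations into information about the associative product.

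Next I would invoke the preceding Lemma, which asserts that every $\frac{1}{2}$-superderivation of $\mathcal{N}$ is trivial, i.e.\ an even scalar multiple of the identity. Applying this to the maps $R_z$ forces every odd right multiplication to vanish and every even right multiplication $R_z$ to act as a scalar $\kappa_z \in \mathbb{C}$. The symmetry $x \cdot y = R_y(x) = R_x(y)$ built into the product then lets me run the argument of Theorem \ref{princth} in its super form (Lemma \ref{superteo}): for two homogeneous linearly independent elements the two resulting scalars must both be zero, whence $x \cdot y = 0$ identically and the structure is trivial.

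Since all the genuine work --- the classification of $\frac{1}{2}$-superderivations of $\mathcal{N}$ --- is already carried out in the preceding Lemma, the corollary is a purely formal consequence and I do not expect any real obstacle. The only point requiring a little care is the $\mathbb{Z}_2$-grading bookkeeping: one must check the scalar-multiplication argument componentwise, noting that a trivial $\frac{1}{2}$-superderivation is necessarily even so that the odd right multiplications are killed outright, and that linearly independent homogeneous elements can indeed be chosen because $\dim \mathcal{N} > 1$ whenever the structure is non-trivial. With these remarks in place the statement follows immediately.
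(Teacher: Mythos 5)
Your proposal is correct and follows essentially the same route as the paper: the paper derives this corollary immediately by combining the preceding lemma (triviality of all $\frac{1}{2}$-superderivations of the $N=2$ superconformal algebra) with Lemma \ref{superteo}. Your extra unpacking of the right-multiplication argument is just the (omitted) proof of Lemma \ref{superteo} itself, carried out in the same way as Theorem \ref{princth}, so there is no substantive difference.
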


\section{$\frac{1}{2}$-derivations of thin Lie algebra 
and transposed Poisson algebras}

There are several papers that deal with the problem of classification of nilpotent algebras (for example, low dimensional algebras, 
filiform algebras, algebras with an abelian ideal, etc). 
Nilpotent Lie algebras with an abelian ideal of codimension $1$ were described in \cite{khakha}.
Local derivations of nilpotent Lie algebras with an abelian ideal of codimension $1$ were studied in \cite{tang20}.
In the present section, we describe all transposed Poisson structures on nilpotent Lie algebras with an abelian ideal of codimension $1.$
In our proof we consider the infinite-dimensional nilpotent Lie algebra with an abelian ideal of codimension $1,$
but the statement of the theorem can be adapted for the finite-dimensional case.

\begin{definition}
The thin Lie algebra $\mathfrak{L}$ is spanned by generators $\{e_i\}_{i \in \mathbb{N} }.$ 
These generators satisfy 
 \[
 [e_1,e_n] = e_{n+1}, \ n >1.
 \]
 We denote the operator of left multiplication on $e_1$ by $L.$
\end{definition}

\begin{lemma}\label{lenthin}
Let $\varphi$ be a $\frac{1}{2}$-derivation of $\mathfrak{L}.$
Then 
\begin{longtable}{rclrclrcl}
$\varphi (e_1)$& $=$& $\sum\limits_{i \in \mathbb{N}} \alpha_i e_i,$ &
$\varphi (e_2)$&$ =$& $\sum\limits_{i \in \mathbb{N}} \beta_i e_i,$ &
$\varphi (e_{n})$&$ = $ & $(1-2^{2-n}) \alpha_1 e_n+ 2^{2-n} L^{n-2} \varphi(e_2).$
\end{longtable}
\end{lemma}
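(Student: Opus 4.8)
The plan is to separate the three assertions. The first two formulas, $\varphi(e_1)=\sum_{i}\alpha_i e_i$ and $\varphi(e_2)=\sum_{i}\beta_i e_i$, are merely the expansions of $\varphi(e_1)$ and $\varphi(e_2)$ in the basis $\{e_i\}$, so the families $\{\alpha_i\}$ and $\{\beta_i\}$ are introduced by definition and carry no content beyond fixing notation. The entire substance of the lemma is the closed form for $\varphi(e_n)$, which I would obtain by setting up and solving a recursion generated by bracketing repeatedly with $e_1$.

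First I would record the only structural facts about $\mathfrak{L}$ that are needed: that $L(e_n)=[e_1,e_n]=e_{n+1}$ for every $n>1$, and that $[e_i,e_j]=0$ whenever $i,j\geq 2$, since the only nonzero products are those involving $e_1$. These two observations are exactly what is required to evaluate the two brackets that appear when the $\frac{1}{2}$-derivation identity of Definition \ref{12der} is applied to the defining relation $e_{n+1}=[e_1,e_n]$. Applying that identity for $n>1$ gives
\[
\varphi(e_{n+1})=\tfrac{1}{2}\left([\varphi(e_1),e_n]+[e_1,\varphi(e_n)]\right).
\]
Substituting $\varphi(e_1)=\sum_i\alpha_i e_i$ into the first bracket and using that $[e_i,e_n]=0$ for all $i\geq 2$ (here $n\geq 2$), only the $i=1$ term survives, so $[\varphi(e_1),e_n]=\alpha_1 e_{n+1}$; the second bracket is simply $L\varphi(e_n)$. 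Hence the recursion collapses to
\[
\varphi(e_{n+1})=\tfrac{1}{2}\,\alpha_1 e_{n+1}+\tfrac{1}{2}\,L\varphi(e_n),\qquad n\geq 2.
\]

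Finally I would solve this recursion by induction on $n$, taking $n=2$ as the base case, where the asserted formula reduces to the tautology $\varphi(e_2)=\varphi(e_2)$. Feeding the inductive hypothesis $\varphi(e_n)=(1-2^{2-n})\alpha_1 e_n+2^{2-n}L^{n-2}\varphi(e_2)$ into the recursion and pushing $L$ through the term $\alpha_1 e_n$ via $L(e_n)=e_{n+1}$, the coefficient of $\alpha_1 e_{n+1}$ becomes $\tfrac{1}{2}\bigl(1+(1-2^{2-n})\bigr)=1-2^{1-n}$ and the remaining contribution is $2^{1-n}L^{n-1}\varphi(e_2)$, which is precisely the claimed formula with $n$ replaced by $n+1$.

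I do not expect a genuine obstacle here; the argument is a single induction. The only points that require care are the bookkeeping of the powers of $2$ in the induction step, and the observation that every bracket $[e_i,e_n]$ with $i\geq 2$ vanishes, since it is exactly this vanishing that isolates the single scalar $\alpha_1$ out of the whole expansion of $\varphi(e_1)$ and reduces $\varphi(e_n)$, for $n\geq 2$, to the data $\alpha_1$ and $\varphi(e_2)$. I would note that any further relations (for instance, constraints coming from $[e_i,e_j]=0$ with $i,j\geq 2$) are not part of this lemma and are to be analyzed separately.
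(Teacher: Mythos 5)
Your proposal is correct and follows essentially the same route as the paper: apply the $\frac{1}{2}$-derivation identity to the defining relation $e_n=[e_1,e_{n-1}]$, observe that only the $\alpha_1$ term survives in $[\varphi(e_1),e_{n-1}]$ since brackets among $e_i$ with $i\geq 2$ vanish, and solve the resulting recursion $\varphi(e_n)=\tfrac{1}{2}\alpha_1 e_n+\tfrac{1}{2}L\varphi(e_{n-1})$ by induction. If anything, your write-up is slightly more careful than the paper's, which leaves the induction step implicit and in its displayed recursion drops the factor $\tfrac{1}{2}$ in front of $L\varphi(e_{n-1})$ (an evident typo, since the stated coefficients $(1-2^{2-n})$ and $2^{2-n}$ require it, exactly as your bookkeeping shows).
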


\begin{proof}
Let $\varphi (e_1)=\sum\limits_{i \in \mathbb{N}} \alpha_i e_i$ and $\varphi (e_2)=\sum\limits_{i \in \mathbb{N}} \beta_i e_i,$ then
\begin{longtable}{lll}
$\varphi(e_n)$&$=$&$\frac{1}{2}\left([\varphi(e_1),e_{n-1}]+[e_1, \varphi(e_{n-1})]\right)=
\frac{1}{2} \alpha_1 e_{n} + L \varphi(e_{n-1}).$
\end{longtable}
Hence, by induction,  the statement follows.  

\end{proof}

\begin{theorem}
Let $(\mathfrak{L}, \cdot, [\;,\;])$ be a transposed Poisson algebra structure defined on the thin Lie algebra $(\mathfrak{L}, [\;,\;])$.
Then $(\mathfrak{L}, \cdot, [\;,\;])$ is not Poisson algebra and it is isomorphic to 
 $(\mathfrak{L}, *, [\;,\;]),$
 where $e_1 *e_1 =e_k,$ for some $k\geq 2.$
\end{theorem}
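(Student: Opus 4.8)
The plan is to follow the strategy of Theorem~\ref{teowitt}: turn the unknown commutative associative product into a family of $\frac{1}{2}$-derivations via Lemma~\ref{glavlem}, and then exploit the rigidity of these maps — now supplied by Lemma~\ref{lenthin} — to reconstruct the product. For each $j$ the right multiplication $R_{e_j}=:\varphi_j$ is a $\frac{1}{2}$-derivation with $e_i\cdot e_j=\varphi_j(e_i)$, and commutativity of $\cdot$ gives the symmetry $\varphi_j(e_i)=\varphi_i(e_j)$. Since Lemma~\ref{lenthin} shows that every $\frac{1}{2}$-derivation of $\mathfrak L$ is determined by its values on the two generators $e_1,e_2$, the whole multiplication table is governed by the three seeds $u:=e_1\cdot e_1$, $v:=e_1\cdot e_2$ and $s:=e_2\cdot e_2$, and I would first record the explicit $L$-shift formulas $e_1\cdot e_n=\varphi_1(e_n)$ and $e_2\cdot e_n=\varphi_2(e_n)$ coming from that lemma.

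The main leverage comes from the fact that, since $(\mathfrak L,\cdot)$ is commutative and associative, the assignment $x\mapsto R_x$ is an algebra homomorphism, so $R_{e_1}^2=R_{e_1\cdot e_1}$ and $R_{e_1}R_{e_2}=R_{e_1\cdot e_2}$, i.e.
\[ \varphi_1^2=\sum_i a_i\varphi_i,\qquad \varphi_1\varphi_2=\sum_i v_i\varphi_i, \]
where $u=\sum_i a_ie_i$ and $v=\sum_i v_ie_i$. Expanding the first identity through the explicit form of $\varphi_1$ from Lemma~\ref{lenthin} turns it into an overdetermined linear system in the $a_i$: the scalar $1-2^{2-n}$ enters quadratically on the left but only to the first power on the right, and letting $n$ range over $n\geq3$ forces the $e_1$-component $a_1$ of $u$ to vanish, while the remaining equations pin $v$ and $s$ to $u$. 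I would also use the companion relation $\varphi_1^2(e_2)=\varphi_2(u)$ (the associativity of the triple $(e_1,e_1,e_2)$) to show, by a lowest-degree comparison, that $u=0$ forces $v=0$.

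Next I would handle the non-triviality bookkeeping and pass to a normal form. A non-trivial structure has some seed nonzero; if $u=0$ (hence $v=0$) but $s\neq0$, I apply the Lie automorphism of $\mathfrak L$ fixing the bracket and sending $e_1\mapsto e_1+e_2$, which folds $s$ into the $e_1$-slot and makes the transported $e_1\cdot e_1$ nonzero. Thus one may assume $u=e_1\cdot e_1=\sum_{i\geq k}a_ie_i$ with $a_k\neq0$ and $k\geq2$. The residual automorphisms — the rescaling $e_2\mapsto\mu e_2$, which rescales the whole tower, together with the corrections $e_1\mapsto e_1+\sum_{i\geq2}c_ie_i$ — are then used to normalise $a_k$ to $1$ and to annihilate the tail $a_{k+1},a_{k+2},\dots$, simultaneously driving $v$ and $s$ to zero, which yields the asserted form $e_1*e_1=e_k$. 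I expect this last normalisation — verifying that the tail of $u$ is removable by an automorphism while the product stays in the prescribed shape — to be the main obstacle, the more so since, in contrast with the Witt and $\mathcal W(a,-1)$ cases, the associative part here is genuinely nilpotent and is not a mutation of a standard algebra, so no simplicity shortcut is available.

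Finally, that $(\mathfrak L,\cdot,[\;,\;])$ is never a Poisson algebra is immediate from the normal form, since the Poisson property is invariant under isomorphism of the pair of operations: the Poisson Leibniz rule applied to $(e_1,e_1,e_1)$ would give $[e_1,e_1\cdot e_1]=[e_1,e_1]\cdot e_1+e_1\cdot[e_1,e_1]=0$, whereas $[e_1,e_1\cdot e_1]=[e_1,e_k]=e_{k+1}\neq0$ because $k\geq2$. This contradiction finishes the proof.
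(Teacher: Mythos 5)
Your skeleton is the same as the paper's (Lemma \ref{glavlem} to realize the product by $\frac{1}{2}$-derivations, Lemma \ref{lenthin} to reduce everything to the seeds $u,v,s$, associativity constraints, a change of basis, and the Poisson check at the end, which is correct). But two steps of the plan genuinely fail.

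First, the associativity input you allow cannot kill $s=e_2\cdot e_2$. Every identity you invoke --- $R_{e_1}^2=R_u$, $R_{e_1}R_{e_2}=R_v$, $\varphi_1^2(e_2)=\varphi_2(u)$ --- is an associator with $e_1$ in one of the slots. Now consider the commutative product with $e_1\cdot x=0$ for all $x$ and $e_i\cdot e_j=2^{4-i-j}e_{i+j-2}$ for $i,j\ge 2$ (this is the case $u=v=0$, $s=e_2$). Each $R_w$ here is a $\frac{1}{2}$-derivation, so the compatibility law \eqref{link1} holds, and every associator containing $e_1$ vanishes because $R_{e_1}=0$; yet the product is not associative: $(e_2\cdot e_2)\cdot e_3=\tfrac{1}{2}e_3$ while $e_2\cdot(e_2\cdot e_3)=\tfrac{1}{4}e_3$. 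So your constraint system is blind to exactly the case you then try to dispatch with the automorphism $e_1\mapsto e_1+e_2$; and that automorphism does not repair anything, since after transport the nonzero products inside the tower $\langle e_i\rangle_{i\ge 2}$ are merely relabelled ($e_2*e_2$ is still nonzero), and no later step of your plan removes them. What is needed is associativity with \emph{two} indices $\ge 2$: the paper compares $e_i\cdot(e_j\cdot e_n)$ with $e_j\cdot(e_i\cdot e_n)$ for $i\neq j$, $i,j,n\ge 2$, gets mismatched prefactors $2^{4-j-n}$ versus $2^{4-i-n}$, and concludes $z=0$ by a top-index comparison. (A smaller inaccuracy of the same kind: your ``quadratic versus linear in $1-2^{2-n}$'' comparison of diagonal coefficients yields $a_1=v_2$, not $a_1=0$; the paper obtains $\alpha_1=0$ together with $y=0$ from the $(e_1,e_1,e_n)$ associativity relation, again by comparing top indices.)

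Second, the normalization. The automorphisms you propose --- the uniform rescaling $e_n\mapsto\mu e_n$ ($n\ge2$) and the corrections $e_1\mapsto e_1+\sum_{i\ge2}c_ie_i$ --- cannot annihilate the tail of $u$: once $v=s=0$ and $e_1\cdot e_n=0$ for $n\ge 2$, the correction maps fix $u$ outright, since $(e_1+c)\cdot(e_1+c)=e_1\cdot e_1=u$, and the rescaling only multiplies $u$ by $\mu^{-1}$. So the step you yourself flag as the main obstacle is indeed not achievable with these tools. The missing idea (the paper's) is to re-coordinatize the abelian ideal by the $L$-shifts of $u$ itself: writing $u=\sum_{r=k}^{k+p}\alpha_re_r$ with $\alpha_k\neq0$, set $E_1=e_1$ and $E_n=\sum_{r=k}^{k+p}\alpha_re_{r+n-k}$ for $n\ge2$. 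Because the bracket acts as the shift operator, this triangular map is a Lie automorphism ($[E_1,E_n]=E_{n+1}$), and it turns $u$ into the single basis vector $E_k$, giving $e_1*e_1=e_k$ with no coefficient-by-coefficient normalization at all.
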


\begin{proof}
We aim to describe the multiplication $\cdot.$
By lemma \ref{glavlem}, 
for every element $e_n$ there is a related $\frac{1}{2}$-derivation $\varphi_n$ of $(\mathfrak{L}, [\;,\;]),$
such that $\varphi_j(e_i)=e_i \cdot e_j = \varphi_i(e_j).$
Now, by Lemma \ref{lenthin},  
\begin{longtable}{rclrclrcl}
$\varphi_i (e_1)$& $=$& $\sum\limits_{j \in \mathbb{N}} \alpha_{ij} e_j,$ &
$\varphi_i (e_2)$&$ =$& $\sum\limits_{j \in \mathbb{N}} \beta_{ij}e_j,$ &
$\varphi_i (e_{n})$&$ = $ & $(1-2^{2-n}) \alpha_{i1} e_n+ 2^{2-n} L^{n-2} \varphi_i(e_2).$\\
\end{longtable}
Let us use the following notations
\[\varphi_1 (e_1)= \sum_{r=1}^p \alpha_r e_r=x, \; 
\varphi_2 (e_1)=\sum_{r=1}^q \beta_r e_r= y, \; 
\varphi_2 (e_2)=\sum_{r=1}^t \gamma_r e_r= z. \]
Then
\[ e_1 \cdot e_1 = \varphi_1 (e_1)= x, \; e_2 \cdot e_1=e_1 \cdot e_2=\varphi_1 (e_2)=\varphi_2 (e_1)= y, \; e_2 \cdot e_2= \varphi_2 (e_2)= z. \]
 
On the other hand, if $i,j,n \geq 3,$ then
\begin{longtable}{lcccr} 
$e_1 \cdot e_n$& $=$ & 
$\varphi_1(e_n)= (1-2^{2-n}) \alpha_{11} e_n+ 2^{2-n} L^{n-2} \varphi_1(e_2)$& $=$ & $(1-2^{2-n}) \alpha_{11} e_n+ 2^{2-n} L^{n-2} y,$\\

$e_2 \cdot e_n$& $=$ & 
$\varphi_2(e_n)= (1-2^{2-n}) \alpha_{21} e_n+ 2^{2-n} L^{n-2} \varphi_2(e_2)$& $=$ & $(1- 2^{2-n})\alpha_{21} e_n+ 2^{2-n} L^{n-2} z,$\\
\end{longtable}
and
\begin{longtable}{lllllllllllll} 
$e_i \cdot e_j$ &$ = $ & $(1-2^{2-j})\alpha_{i1} e_j+ 2^{2-j} L^{j-2} \varphi_i(e_2)$\\
&$ = $ & 
$(1-2^{2-j}) \alpha_{i1} e_j+ 
 (2^{2-j}-2^{4-i-j})\alpha_{21} L^{j-2}e_i+ 2^{4-i-j} L^{i+j-4} z,$\\

$e_j \cdot e_i$ &$ = $ & $(1-2^{2-i})\alpha_{j1} e_i+ 2^{2-i} L^{i-2} \varphi_j(e_2)$\\
&$ = $ & 
$(1- 2^{2-i}) \alpha_{j1} e_i+ 
 (2^{2-i}-2^{4-i-j})\alpha_{21} L^{i-2}e_j+ 2^{4-i-j} L^{i+j-4} z.$\\
\end{longtable}
From the last two relations, for $i \neq j,$ we have 
\begin{longtable}{lcrl}
$\alpha_{j1}=\alpha_{21}=0,$ & \; which gives \; &$e_i \cdot e_j=2^{4-i-j} L^{i+j-4} z,$ & for all $i,j \geq 2.$
\end{longtable}

Now we have defined the commutative multiplication $\cdot,$
 but it is also an associative multiplication.
 Let us study the associative property of the multiplication $\cdot.$

For $i\neq j, n \geq 2,$ we have

\begin{longtable}{llllllllllllll}

$e_i\cdot (e_j \cdot e_n)$ &$=$& $ e_i \cdot (2^{4-j-n} L^{j+n-4} z)$
&$=$&
$2^{4-j-n}( \sum\limits_{r=1}^{t} 2^{8-i-j-n-r} \gamma_r L^{i+j+n+r-8} z ) $\\
\end{longtable}
and on the other hand
\begin{longtable}{llllllllllllll}
$e_j\cdot (e_i \cdot e_n)$ &$=$& $ e_j \cdot (2^{4-i-n} L^{i+n-4} z)$
&$=$&
$2^{4-i-n}( \sum\limits_{r=1}^{t} 2^{8-i-j-n-r} \gamma_r L^{i+j+n+r-8} z ) $\\
\end{longtable}
 it implies that  $z=0.$

Summarizing, 
we have that there are only the following nonzero products:
\begin{longtable}{lllllllll} 
$e_1 \cdot e_1$ & $=$ & $x,$ &$ e_1 \cdot e_n$ & $=$ & 
$(1- 2^{2-n}) \alpha_{1} e_n+ 2^{2-n} L^{n-2} y.$
\end{longtable}

Now, we are interested in the associative relation for $e_1, e_1$ and $e_n:$
\begin{longtable}{lllllllll} 
$e_1 \cdot (e_1 \cdot e_n)$ 
&$=$&$e_1 \cdot ((1- 2^{2-n})\alpha_{1} e_n+ 2^{2-n} L^{n-2} y)$\\
&$=$&$(1- 2^{2-n}) \alpha_{1} e_1 \cdot e_n+ 2^{2-n} \sum\limits_{r=1}^q \beta _re_1 \cdot e_{n-2+r}$
\end{longtable}
and on the other hand, we have
\begin{longtable}{lllllllll} 
$e_n\cdot (e_1 \cdot e_1)$ &$=$&
$e_n\cdot x $ &$=$&
$ \alpha_1 e_1 \cdot e_n,$
\end{longtable}
which gives
\begin{equation}\label{link} 
\sum\limits_{r=n-1}^{n-2+q} (1- 2^{2-r}) \alpha_1 e_r + 2^{2-r} L^{r-2} y =
(1- 2^{2-n} ) \alpha_1 e_n+ 2^{2-n} L^{n-2}y,
\end{equation}
hence, if $q\geq 2,$ then comparing the highest number of indexes of the basis elements in the last relation
we have $n+2q-4=n+q-2,$ which gives $q=2.$

 Next, for $q=1,2$ relation (\ref{link}) implies that   
$\alpha_1=0,$ $\beta_1=0$ and $\beta_2=0.$
It is easy see, that if $e_1 \cdot e_1 \neq 0,$ then
$[ e_1 \cdot e_1, e_1] \neq 0$ and by \cite[Proposition 2.4]{bai20}, 
$(\mathfrak{L}, \cdot, [\;,\;])$ is not a Poisson algebra.

Let $e_1 \cdot e_1= \sum\limits_{r=k}^{k+p} \alpha_r,$    with   $\alpha_k\neq 0$ and $\alpha_{k+p}\neq 0.$ By choosing   a new bases
\[ E_1=e_1, \; \ldots, \;
E_{k+w}=\sum\limits_{r=k}^{k+p} \alpha_{r} e_{r+w}, \; \ldots, \; \mbox{where $ 2-k \leq w,$
 } \]
we have an isomorphic algebra structure with the following table of multiplications:
\[ E_1 * E_1 = E_k, \; [E_1, E_n]= E_{n+1}, \; n\geq 2,\]
 and the statement follows. 

\end{proof}

\section{$\frac{1}{2}$-derivations of the solvable Lie algebra with abelian nilpotent radical 
and transposed Poisson algebras}

Several papers deal with the problem of classification of all solvable Lie algebras with a given nilradical (for example, Abelian, Heisenberg, filiform, quasi-filiform nilradicals, etc. \cite{wint}).
Local derivations of solvable Lie algebras with an abelian radical of codimension $1$ were studied in \cite{ayupov}.
In the present section, we describe all transposed Poisson structures on solvable Lie algebras with abelian radical of codimension $1.$
In our proof we consider the solvable infinite-dimensional   Lie algebra with an abelian radical of codimension $1$ 
but the statement of the theorem can be adapted for the finite-dimensional case.

\begin{definition}
Let $\mathfrak{L}$ be the solvable Lie algebra with abelian nilpotent radical of codimension $1$ which spanned by generators $\{e_i\}_{i \in \mathbb{N}}$ satisfying
\[ [e_1, e_{n}]= e_{n}, \ n\geq 2. \]
\end{definition}

\begin{lemma}\label{lemab1}
Let $\varphi$ be a $\frac{1}{2}$-derivation of $\mathfrak{L}.$
Then 
\begin{longtable}{rclrclrcl}
$\varphi (e_1)$& $=$& $ \alpha e_1+ \sum\limits_{i \geq 2} \alpha_i e_i,$ &
$\varphi (e_n)$&$ =$& $ \alpha e_n,$ & 
\end{longtable}
\end{lemma}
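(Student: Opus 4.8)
The plan is to determine $\varphi$ by testing the $\tfrac{1}{2}$-derivation identity on the defining brackets $[e_1,e_n]=e_n$, which is the only nonzero product in this algebra. First I would write $\varphi(e_1)=\alpha e_1+\sum_{i\geq 2}\alpha_i e_i$ and, for each $n\geq 2$, write $\varphi(e_n)=\beta^{(n)} e_1+\sum_{i\geq 2}\beta^{(n)}_i e_i$ as completely general expansions in the basis $\{e_i\}_{i\in\mathbb{N}}$. The key structural fact I would exploit is that the bracket $[\,\cdot\,,\,\cdot\,]$ has a very degenerate range: $[e_1,e_k]=e_k$ for $k\geq 2$, all other brackets among basis vectors vanish, and in particular $e_1$ never appears as a bracket. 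This means the left-hand side $\varphi[e_1,e_n]=\varphi(e_n)$ can be an arbitrary vector, but the right-hand side $\tfrac{1}{2}([\varphi(e_1),e_n]+[e_1,\varphi(e_n)])$ is constrained to lie in $\langle e_i\rangle_{i\geq 2}$, and this asymmetry is what will pin down the coefficients.

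The central calculation is to expand $\tfrac{1}{2}([\varphi(e_1),e_n]+[e_1,\varphi(e_n)])$ explicitly. Using bilinearity, $[\varphi(e_1),e_n]=\alpha[e_1,e_n]=\alpha e_n$ since every $[e_i,e_n]$ with $i\geq 2$ vanishes; similarly $[e_1,\varphi(e_n)]=\sum_{i\geq 2}\beta^{(n)}_i[e_1,e_i]=\sum_{i\geq 2}\beta^{(n)}_i e_i$, because the $e_1$-component of $\varphi(e_n)$ is killed by $[e_1,e_1]=0$. Setting this equal to $\varphi(e_n)=\beta^{(n)}e_1+\sum_{i\geq 2}\beta^{(n)}_i e_i$ and matching coefficients basis-vector by basis-vector, the $e_1$-coefficient gives $\beta^{(n)}=0$, the off-diagonal $e_i$-coefficients for $i\neq n$ give $\beta^{(n)}_i=\tfrac{1}{2}\beta^{(n)}_i$, hence $\beta^{(n)}_i=0$, and the $e_n$-coefficient gives $\beta^{(n)}_n=\tfrac{1}{2}(\alpha+\beta^{(n)}_n)$, hence $\beta^{(n)}_n=\alpha$. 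This yields exactly $\varphi(e_n)=\alpha e_n$ for all $n\geq 2$, with the same scalar $\alpha$ coming from the $e_1$-coefficient of $\varphi(e_1)$.

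It remains to check that $\varphi(e_1)$ is otherwise unconstrained, i.e.\ the coefficients $\alpha_i$ for $i\geq 2$ are free; this follows because $e_1$ appears in no bracket, so the $\alpha_i$ never enter any instance of the defining identity. I would verify this by noting that the only bracket relations available all have $e_1$ in the first slot and a radical element in the second, and in each such relation the term $[\varphi(e_1),e_n]$ only ever sees the $e_1$-component $\alpha$ of $\varphi(e_1)$, never the tail $\sum_{i\geq 2}\alpha_i e_i$. I do not anticipate a serious obstacle here; the main point requiring care is the bookkeeping of the coefficient matching for the diagonal versus off-diagonal terms, and in particular being careful that $n$ itself ranges only over $n\geq 2$ so that $e_n$ is always a radical element. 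This gives precisely the two displayed formulas $\varphi(e_1)=\alpha e_1+\sum_{i\geq 2}\alpha_i e_i$ and $\varphi(e_n)=\alpha e_n$, completing the proof.
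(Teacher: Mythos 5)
Your proof is correct and takes essentially the same route as the paper: both apply the $\frac{1}{2}$-derivation identity to the defining brackets $[e_1,e_n]=e_n$, use the abelian radical to kill all other terms, and match basis coefficients, the only difference being that you match coefficients term by term while the paper compresses the same computation into a single vector identity. Your closing observation that the tail coefficients $\alpha_i$ of $\varphi(e_1)$ are genuinely unconstrained goes slightly beyond what the lemma asserts (it only claims the necessary form), but it is correct and harmless.
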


\begin{proof}
Let $\varphi (e_n)= \sum\limits_{i \in {\mathbb Z}} \alpha_{ni} e_i,$ then if $n \geq 2,$ we have
\begin{longtable}{l}
$\varphi(e_n)=\frac{1}{2}\left([\varphi(e_1),e_{n}]+[e_1, \varphi(e_{n})]\right)=
\frac{1}{2} \alpha_{11} e_{n} + \frac{1}{2} \varphi(e_{n}) - \frac{1}{2} \alpha_{i1} e_1.$
\end{longtable}
Hence, we have the statement of the Lemma.

\end{proof}

\begin{theorem}
Let $(\mathfrak{L}, \cdot, [\;,\;])$ be a transposed Poisson algebra structure defined on the solvable Lie algebra with abelian nilpotent radical of codimension $1$, $(\mathfrak{L}, [\;,\;])$.
Then $(\mathfrak{L}, \cdot, [\;,\;])$ is not Poisson algebra and it is isomorphic to one of the following structures
\begin{enumerate}
 \item $(\mathfrak{L}, *, [\;,\;]),$ where $e_1*e_1=e_1+e_2, \ e_1*e_n =e_n, n \geq 2;$ 
 \item $(\mathfrak{L}, *, [\;,\;]),$ where $e_1*e_1 =e_2;$ 
 \item $(\mathfrak{L}, *, [\;,\;]),$ where $e_1*e_n =e_n, \ n \geq 1.$ 
 
\end{enumerate}

\end{theorem}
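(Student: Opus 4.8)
The plan is to mirror the strategy used for the Witt algebra in Theorem \ref{teowitt} and for the thin Lie algebra: first pin down the commutative product $\cdot$ through the correspondence between right multiplications and $\frac{1}{2}$-derivations, and only afterwards normalise the result by a change of basis. First I would invoke Lemma \ref{glavlem}: for every generator $e_i$ the right multiplication $\varphi_i := R_{e_i}$ is a $\frac{1}{2}$-derivation of $(\mathfrak L,[\;,\;])$, and $e_i\cdot e_j=\varphi_i(e_j)=\varphi_j(e_i)$. Feeding each $\varphi_i$ into Lemma \ref{lemab1} gives $\varphi_i(e_1)=\alpha_i e_1+\sum_{k\ge 2}\alpha_{i,k}e_k$ and $\varphi_i(e_n)=\alpha_i e_n$ for $n\ge 2$. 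The symmetry $\varphi_1(e_m)=\varphi_m(e_1)$, compared in the $e_1$-coefficient, forces $\alpha_m=0$ for all $m\ge 2$, whence $\varphi_m(e_n)=\alpha_m e_n=0$ for $m,n\ge 2$.

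Writing $\alpha:=\alpha_1$ and $v:=\sum_{k\ge2}\alpha_{1,k}e_k$, this produces the whole multiplication table $e_1\cdot e_1=\alpha e_1+v$, $e_1\cdot e_n=\alpha e_n$ $(n\ge2)$, and $e_m\cdot e_n=0$ $(m,n\ge2)$. A short computation shows this product is automatically associative — the radical $\langle e_k\rangle_{k\ge2}$ is a square-zero ideal on which $e_1$ multiplies by the scalar $\alpha$ — so no further relations survive; the only genuine restriction is that $\cdot\ne0$, i.e. $(\alpha,v)\ne(0,0)$.

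Next I would dispose of the Poisson claim. Since $[e_1\cdot e_1,e_1]=[\,\alpha e_1+v,e_1\,]=-v$ and $[e_1\cdot e_2,e_1]=-\alpha e_2$, at least one of these is nonzero whenever $(\alpha,v)\ne(0,0)$, so $[\mathfrak L\cdot\mathfrak L,\mathfrak L]\ne0$ and $(\mathfrak L,\cdot,[\;,\;])$ cannot be Poisson by \cite[Proposition 2.4]{bai20}.

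Finally I would classify the pairs $(\alpha,v)$ up to isomorphism. The admissible changes of basis are exactly the Lie automorphisms of $\mathfrak L$: since $[\mathfrak L,\mathfrak L]=\langle e_k\rangle_{k\ge2}$ is characteristic and $e_1$ acts as the identity on it, every automorphism has the shape $\psi(e_1)=e_1+w$ with $w\in[\mathfrak L,\mathfrak L]$ and $\psi|_{[\mathfrak L,\mathfrak L]}=T\in GL(\langle e_k\rangle_{k\ge2})$, so the $e_1$-coefficient is pinned to $1$. Splitting on $\alpha$: when $\alpha=0$ (hence $v\ne0$) a radical basis with $E_2=v$ lands in case (2); when $\alpha\ne0$ one normalises to $\alpha=1$ and then, according to whether $v=0$ or $v\ne0$, reads off case (3) or, after setting $E_2=v$, case (1). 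The main obstacle is precisely this last step: the scalar $\alpha$ is invariant under Lie automorphisms (the eigenvalue of $R_{e_1}$ on the radical is unchanged by $\psi$), so reducing $\alpha\ne0$ to $\alpha=1$ requires an additional rescaling $\cdot\mapsto\lambda\cdot$ of the product, which one must justify preserves the transposed Poisson axioms. Verifying that the three normal forms are pairwise non-isomorphic — distinguished by whether $\alpha=0$ and, when $\alpha\ne0$, whether the nilpotent part $v$ vanishes — is the remaining bookkeeping.
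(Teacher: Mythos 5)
Your derivation of the multiplication table is correct and is exactly the paper's route: Lemma \ref{glavlem} plus Lemma \ref{lemab1}, with the symmetry $\varphi_1(e_m)=\varphi_m(e_1)$ killing the $e_1$-coefficients $\alpha_m$ for $m\ge 2$, yielding $e_1\cdot e_1=\alpha e_1+v$, $e_1\cdot e_n=\alpha e_n$, $e_m\cdot e_n=0$ with $v\in\langle e_k\rangle_{k\ge 2}$. Your non-Poisson argument via \cite[Proposition 2.4]{bai20} is also fine (the paper merely asserts this step). The genuine gap is in the normalization, and it is twofold.

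First, your proposed fix for the scalar --- rescaling the product $\cdot\mapsto\lambda\cdot$ to force $\alpha=1$ --- is self-defeating. As you yourself observe, $\alpha$ is invariant under every Lie automorphism, hence under every isomorphism of transposed Poisson algebras carrying the fixed bracket to itself; rescaling the product replaces $\alpha$ by $\lambda\alpha$ and therefore \emph{changes the isomorphism class}. No verification that the rescaled structure still satisfies the transposed Poisson axioms can repair this: the rescaling is simply not an isomorphism, and by your own invariance observation the structures $e_1* e_n=\alpha e_n$ $(n\ge 1)$ with distinct nonzero $\alpha$ are pairwise non-isomorphic, so they cannot all be reduced to the single form (3). (The paper's own proof sidesteps rather than resolves this point: it sets $E_1=\alpha^{-1}e_1$, which is not a Lie automorphism --- it rescales the bracket, $[E_1,E_n]=\alpha^{-1}E_n$ --- so the difficulty you flagged is real, not an artifact of your approach.)

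Second, your claimed invariant separating forms (1) and (3) --- ``when $\alpha\neq 0$, whether $v$ vanishes'' --- is not an invariant, so the final ``bookkeeping'' would fail. Push the parameters through the automorphisms you identified: for $\psi(e_1)=e_1+w$, $\psi|_{\langle e_k\rangle_{k\ge2}}=T$, a direct computation gives $(\alpha,v)\mapsto(\alpha,\,Tv-\alpha w)$. Hence for $\alpha\neq 0$ the translation part absorbs $v$ entirely (take $w=\alpha^{-1}Tv$). Concretely, $\psi(e_1)=e_1+e_2$, $\psi(e_n)=e_n$ $(n\ge 2)$ is a Lie automorphism carrying form (3) to form (1), so these two normal forms are isomorphic. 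The honest output of your setup is therefore: for $\alpha\neq 0$ the normal form is $e_1*e_n=\alpha e_n$ $(n\ge 1)$ with the invariant $\alpha$ surviving, and for $\alpha=0$ it is $e_1*e_1=e_2$; you cannot recover the paper's three-item list by isomorphisms that preserve the bracket on the nose.
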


\begin{proof}
We aim to describe the multiplication $\cdot.$
By Lemma \ref{glavlem}, 
for every element $e_n$ there is a  corresponding   $\frac{1}{2}$-derivation $\varphi_n$ of $(\mathfrak{L}, [\;,\;]),$
such that $\varphi_j(e_i)=e_i \cdot e_j = \varphi_i(e_j)$
and $\varphi_1(e_1) = \alpha e_1 +\sum\limits_{i \geq 2} \alpha_i e_i.$
  By Lemma \ref{lemab1},  
\begin{enumerate}
 \item if $i,j \geq 2,$ then $e_i \cdot e_j =0;$
 \item if $i \geq 2,$ then $e_1 \cdot e_i = \varphi_1 (e_i)= \alpha e_i;$
 \item $e_1\cdot e_1= \varphi_1(e_1)= \alpha e_1 +\sum\limits_{i \geq 2} \alpha_ie_i.$
\end{enumerate}
It is easy to see that $\cdot$ is an associative commutative multiplication.

  Next, 
\begin{enumerate}
 \item if $\alpha \neq 0$ and $\sum\limits_{i \geq 2} \alpha_i e_i \neq 0,$ where 
 $\sum\limits_{i \geq 2} \alpha_i e_i=\sum\limits_{i \geq u} \alpha_i e_i$ and $\alpha_u \neq 0,$ 
 then 
 by choosing 
 \[E_1= \alpha^{-1}e_1, \ E_2= \alpha^{-2}\sum\limits_{i \geq 2} \alpha_i e_i, \
 E_k= e_{k+1}, 2\leq k \leq u-1,\ E_k= e_{k}, u+1\leq k,\]
 we have an isomorphic algebra structure with the following table of multiplications:
 \[E_1*E_1=E_1+E_2, \ E_1*E_n =E_n, \ n \geq 2.\]

 \item if $\alpha = 0$ and $\sum\limits_{i \geq 2} \alpha_i e_i \neq 0,$ where 
 $\sum\limits_{i \geq 2} \alpha_i e_i=\sum_{i \geq u} \alpha_i e_i$ and $\alpha_u \neq 0,$ 
 then 
 by choosing 
 \[E_1= e_1, \ E_2=\sum\limits_{i \geq 2} \alpha_i e_i, \
 E_k=e_{k+1}, 2\leq k \leq u-1,\ E_k=e_{k}, u+1\leq k,\]
 we have an isomorphic algebra structure with the following table of multiplications:
 \[E_1*E_1= E_2.\]
 
 \item if $\alpha \neq 0$ and $\sum\limits_{i \geq 2} \alpha_i e_i = 0,$ 
 then 
 by choosing \[E_1= \alpha^{-1}e_1, \ E_k= e_{k}, k \geq 2,\]
 we have an isomorphic algebra structure with the following table of multiplications:
 \[ E_1*E_n =E_n, \ n \geq 1.\] 
\end{enumerate}
It is easy to see that $(\mathfrak{L}, *, [\;,\;])$ is non-Poisson.

Hence, we have the statement of the theorem.

\end{proof}

\end{document}